\theoremstyle{plain}
\newtheorem{thm}{Theorem}[section]
\newtheorem{lem}[thm]{Lemma}
\newtheorem{prop}[thm]{Proposition}
\newtheorem{cor}[thm]{Corollary}
\theoremstyle{definition}
\newtheorem{dff}[thm]{Definition}
\newtheorem{rem}[thm]{Remark}
\newtheorem{exa}[thm]{Example}
\numberwithin{equation}{section} \pagestyle{plain}
 \DeclareMathOperator{\sta}{star}
 \DeclareMathOperator{\conj}{conj}
\DeclareMathOperator{\usd}{usd}
\DeclareMathOperator{\fragd}{fragd}
\DeclareMathOperator{\Fragd}{Fragd}
\DeclareMathOperator{\Frag}{Frag}
\def\B{\mathcal{B}}
\def\W{\mathcal{W}}
\def\D{\mathcal{D}}
\def\F{\mathcal{F}}
\def\H{\mathcal{H}}
\def\P{\mathcal{P}}
\def\U{\mathcal{U}}
\def\V{\mathcal{V}}
\def\T{\mathcal{T}}
\def\phi{\varphi}
\def\p{\partial}
\def\rz{\mathbb{R}}
\def\R{\rz}
\def\wyr#1{\textit{#1}}
\def\s{\subset}
\def\t{\times}
\def\r{\rightarrow}
\def\ld{,\ldots,}
 \DeclareMathOperator{\diff}{Diff}
 \DeclareMathOperator{\cl}{cl}
 \DeclareMathOperator{\cld}{cld}
 \DeclareMathOperator{\frag}{frag}
 \DeclareMathOperator{\intt}{int}
\DeclareMathOperator{\id}{id} 
 \DeclareMathOperator{\supp}{supp}
\def\rr{\rho}
\keywords{ group of homeomorphisms, factorizable group, commutator
subgroup, perfectness, uniform perfectness, simplicity, uniform
simplicity, open manifold} \subjclass{22A05, 22E65,  57S05}
\thanks{Partially supported by the Polish Ministry of Science and Higher Education and the
AGH grant n. 11.420.04}
\address{Faculty of Applied Mathematics, AGH University of Science and
 Technology, al. Mickiewicza 30, 30-059 Krak\'ow, Poland}
\email{ e-mail: imichali@wms.mat.agh.edu.pl,\quad
tomasz@uci.agh.edu.pl}
\date{June 12, 2010}
\title{On the structure of the commutator subgroup of certain homeomorphism groups}
\author{Ilona Michalik, Tomasz Rybicki}
\begin{document}

\maketitle

\begin{abstract} An important theorem of Ling states that if $G$ is any factorizable non-fixing
group of homeomorphisms of a paracompact space then its commutator
subgroup $[G,G]$ is perfect. This paper is devoted to further
studies on the  algebraic structure (e.g. uniform perfectness,
uniform simplicity) of $[G,G]$ and $[\tilde G,\tilde G]$, where
$\tilde G$ is the universal covering group of $G$. In particular,
we prove that if $G$ is bounded factorizable non-fixing group of
homeomorphisms then $[G,G]$ is uniformly perfect (Corollary 3.4).
The case of open manifolds is also investigated. Examples of
homeomorphism groups illustrating the results are given.
\end{abstract}

\section{Introduction}

Given groups $G$ and $H$, by $G\leq H$ (resp. $G\lhd H$) we denote
that $G$ is a subgroup (resp. normal subgroup) of $H$. Throughout
by $\H(X)$ we denote the group of all homeomorphism of a
topological space $X$. Let $U$ be an open subset of $X$ and let
$G$ be a subgroup of $\H(X)$. The symbol $\H_U(X)$ (resp. $G_U$)
stands for the subgroup of elements of $\H(X)$ (resp. $G$) with
support in $U$. For $g\in \H(X)$ the support of $g$, $\supp(g)$,
is the closure of $\{x\in X:\, g(x)\neq x\}$. Let $\H_c(M)$ (resp.
$G$) denotes the subgroup of $\H(M)$ (resp. $G$) of all its
compactly supported elements.

\begin{dff}\label{fac} Let $\U$ be an open cover of $X$.
A group of homeomorphisms $G$ of a space $X$ is called
\emph{$\U$-factorizable} if for every $g\in G$ there are $g_1\ld
g_r\in G$ with $g=g_1\ldots g_r$ and such that $\supp(g_i)\s U_i$,
$i=1\ld r$, for some $U_1\ld U_r\in\U$. $G$ is called
\emph{factorizable} if for every open cover $\U$ of $X$ it is
$\U$-factorizable.

Next $G$ is said to be \wyr{non-fixing} if $G(x)\neq \{ x \}$ for
every $x \in X$, where $G(x):= \{ g(x)|g \in G \}$ is the orbit of
$G$ at $x$.
\end{dff}
Given a group $G$, denote by $[f,g]=fgf^{-1}g^{-1}$ the commutator
of $f,g\in G$, and by $[G,G]$ the commutator subgroup.  Now the
theorem of Ling can be formulated as follows.

\begin{thm} \cite{li}\label{ling}
Let $X$ be a paracompact   topological space and let $G$ be a
factorizable  non-fixing group of homeomorphisms of $X$. Then the
commutator subgroup $[G,G]$ is perfect, that is
$[[G,G],[G,G]]=[G,G]$.
\end{thm}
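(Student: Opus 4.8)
The plan is to prove the nontrivial inclusion $[G,G]\subseteq[[G,G],[G,G]]$ (the reverse being obvious). Since $[G,G]$ is generated by the commutators $[a,b]$ with $a,b\in G$, and since $[[G,G],[G,G]]$, being characteristic in the normal subgroup $[G,G]$, is itself normal in $G$, it suffices to show that $[a,b]\in[[G,G],[G,G]]$ for every $a,b\in G$. The argument rests on two ingredients: a \emph{displacement} input drawn from the non-fixing hypothesis, and \emph{factorizability}, which localizes an arbitrary commutator into pieces supported in small, displaceable sets.

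The algebraic heart is a local statement. Suppose $f,g\in G$ have $\supp(f),\supp(g)\s V$ for an open set $V$, and suppose there is $h\in G$ with $V$, $h(V)$, $h^2(V)$ pairwise disjoint. Then elements of $G_V$, $G_{h(V)}$, $G_{h^2(V)}$ commute across the three disjoint blocks, so commutators are computed blockwise, and one checks the identity
\[ \bigl[\,[f^{-1},h],\,[\,hgh^{-1},h\,]\,\bigr]=h\,[f,g]\,h^{-1}. \]
Indeed $[f^{-1},h]$ acts as $f^{-1}$ on $V$ and as $hfh^{-1}$ on $h(V)$, while $[hgh^{-1},h]$ acts as $hgh^{-1}$ on $h(V)$ and as $h^2g^{-1}h^{-2}$ on $h^2(V)$; the only overlapping block is $h(V)$, on which the outer commutator equals $[hfh^{-1},hgh^{-1}]=h[f,g]h^{-1}$, and it is trivial on $V$ and on $h^2(V)$. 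The left-hand side is a commutator of two elements of $[G,G]$, hence lies in $[[G,G],[G,G]]$; by normality of $[[G,G],[G,G]]$ in $G$ we obtain $[f,g]\in[[G,G],[G,G]]$.

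To reduce the general case to this lemma I would first use paracompactness together with non-fixing to build an open cover $\U$ each of whose members is small enough that it meets the others only inside a set carrying some $h\in G$ with three pairwise disjoint iterates (a star-refinement of a cover by such ``doubly-displaceable'' sets). Given $a,b\in G$, factorizability yields $a=a_1\ldots a_r$ and $b=b_1\ldots b_s$ with each $\supp(a_i)$, $\supp(b_j)$ in a member of $\U$. Expanding $[a,b]$ by the standard rules $[xy,z]=x[y,z]x^{-1}[x,z]$ and $[x,yz]=[x,y]\,y[x,z]y^{-1}$ exhibits it as a product of conjugates $c\,[a_i,b_j]\,c^{-1}$. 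A factor $[a_i,b_j]$ is trivial unless $\supp(a_i)$ meets $\supp(b_j)$, and then both supports, hence $\supp[a_i,b_j]$, lie in a single doubly-displaceable set; the local lemma puts $[a_i,b_j]\in[[G,G],[G,G]]$, and normality absorbs the conjugation by $c$. Hence $[a,b]\in[[G,G],[G,G]]$, as required.

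The main obstacle will be not the algebra but the displacement input: non-fixing directly gives only a \emph{single} displacement, i.e. an $h$ and a neighbourhood $V$ with $V\cap h(V)=\emptyset$, whereas the identity above needs $V$, $h(V)$, $h^2(V)$ pairwise disjoint. (A two-block version of the computation leaves an uncancelled conjugate remainder and only yields $2$-torsion in $[G,G]/[[G,G],[G,G]]$, which is why a third region is essential.) Securing this second, independent displacement uniformly enough to assemble an honest open cover is the delicate step, and is precisely where the interplay of non-fixing with factorizability — the latter supplying an abundance of elements with arbitrarily small support that can be composed to spread $V$ across three disjoint regions — has to be exploited; in degenerate situations where no such room exists, the relevant local commutators are forced to be trivial, so the conclusion still persists. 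Everything else is the formal commutator calculus displayed above.
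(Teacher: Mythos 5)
Your commutator calculus is sound: the identity $[[f^{-1},h],[hgh^{-1},h]]=h[f,g]h^{-1}$ does hold when $V$, $h(V)$, $h^2(V)$ are pairwise disjoint, and your reduction of a general $[a,b]$ to commutators supported in single members of a cover, via a star-refinement and the expansion rules, is exactly the combination of Lemma 3.3 with the formulae (3.1) used in the paper. The genuine gap is the one you flag yourself and then do not close: the hypotheses ``factorizable and non-fixing'' do not supply, for the members $V$ of any cover, an $h\in G$ with $V$, $h(V)$, $h^2(V)$ pairwise disjoint. What they yield (Proposition 3.2) is only that $G$ is locally moving, hence a \emph{single} displacement $h(V)\cap V=\emptyset$. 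The double/triple displacement you need is essentially the ``strongly $\U$-moving'' condition of Definition 3.1(4), which the paper must impose as an \emph{additional} hypothesis in Theorem 3.5 precisely because it does not follow from factorizability and non-fixing. Your fallback claim --- that wherever such room is absent the local commutators are forced to be trivial --- is asserted without proof, and in those situations it \emph{is} the content of the theorem; nothing in your argument delivers it. As written, the proof is therefore incomplete.

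The paper's (Ling's) route avoids this by upgrading the quality of the displacing element rather than the number of displaced copies. By Proposition 3.2, non-fixing plus factorizable implies that $G$, and then $[G,G]$ itself, is locally moving, so $[G,G]$ is $1$-non-fixing: every point has a neighbourhood $U$ with $h(U)\cap U=\emptyset$ for some $h=[h_1,h_2]$ with $h_1,h_2\in[G,G]$, i.e.\ $h\in[[G,G],[G,G]]$. For $f,g\in G_U$ the element $hfh^{-1}$ then commutes with everything supported in $U$, giving $[[h,f],g]=[f^{-1},g]$; and since $[[G,G],[G,G]]$ is normal in $G$ (characteristic in $[G,G]\lhd G$), both $[h,f]=h\cdot(fh^{-1}f^{-1})$ and $[[h,f],g]=[h,f]\cdot g[h,f]^{-1}g^{-1}$ lie in $[[G,G],[G,G]]$. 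Thus a single displacement suffices, and --- unlike yours --- it is actually available from the hypotheses. Replacing your local lemma by this one, and taking the cover $\U$ so that each member is displaced by such a commutator $h$, makes your reduction go through verbatim.
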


 Recall that
a group $G$ is called \wyr{uniformly perfect} \cite{bip} if $G$ is
perfect (i.e. $G=[G,G]$) and there exists a positive integer $r$
such that any element of $G$ can be expressed as a product of at
most $r$ commutators of elements of $G$.  For $g\in [G,G]$, $g\neq
e$, the least $r$ such that $g$ is a product of $r$ commutators is
called the \wyr{commutator length} of $g$ and is denoted by
$\cl_G(g)$. By definition we put $\cl_G(e)=0$.

Throughout we adopt the following notation. Let $M$ be a
paracompact manifold of class $C^r$, where $r=0,1\ld\infty$. Then
$\D^r(M)$ (resp. $\D^r_c(M)$) denotes the group of all
$C^r$-diffeomorphisms of $M$ which can be joined with the identity
by a (resp. compactly supported) $C^r$-isotopy. For simplicity by
$C^0$-diffeomorphism we mean a homeomorphism.

Observe that in view of recent results (Burago, Ivanov and
Polterovich \cite{bip}, Tsuboi \cite{Tsu2}) the diffeomorphism
groups $\D^{\infty}_c(M)$ are uniformly perfect for most types of
manifolds $M$, though some open problems are left.

Our first aim is to prove the following generalization of Theorem
1.2.

\begin{thm}
Let $X$ be a paracompact   topological space and let $G$ be a
factorizable  non-fixing group of homeomorphisms of $X$. Assume
that $\cl_G$  is bounded on $[G,G]$ and that $G$ is bounded with
respect to all fragmentation norms $\frag^{\U}$ (c.f. section 2),
where $\U$ runs over all open covers  of $X$. Then the commutator
subgroup $[G,G]$ is uniformly perfect.
\end{thm}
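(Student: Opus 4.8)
The plan is to upgrade Ling's theorem (Theorem~\ref{ling}) from a qualitative to a quantitative statement: Ling already guarantees that $[G,G]$ is perfect, so it remains only to bound the commutator length $\cl_{[G,G]}$ uniformly on $[G,G]$. The first step uses the hypothesis that $\cl_G$ is bounded on $[G,G]$, say by an integer $K$. Every $h\in[G,G]$ may then be written as $h=\prod_{i=1}^{K}[a_i,b_i]$ with $a_i,b_i\in G$, and since each factor $[a_i,b_i]$ already lies in $[G,G]$, it suffices to find a constant $C$, independent of the pair, such that every commutator $[a,b]$ with $a,b\in G$ is a product of at most $C$ commutators of elements of $[G,G]$. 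Granting this, $\cl_{[G,G]}(h)\le KC$ for all $h\in[G,G]$, which is uniform perfectness.

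Next I would manufacture displacement from the non-fixing hypothesis. For each $x\in X$ choose $t_x\in G$ with $t_x(x)\neq x$ and an open neighbourhood $W_x$ with $t_x(W_x)\cap W_x=\emptyset$; then $\W=\{W_x\}$ is an open cover of $X$ by displaceable sets. Applying the boundedness assumption to this single cover yields an integer $N=N_{\W}$ with $\frag^{\W}\le N$ on $G$, so $a$ and $b$ each factor into at most $N$ homeomorphisms supported in members of $\W$. Using the commutator identities $[xy,z]=x[y,z]x^{-1}[x,z]$ and $[x,yz]=[x,y]\,y[x,z]y^{-1}$ repeatedly, $[a,b]$ rewrites as a product of a number of conjugates of elementary commutators $[a_i,b_j]$ that is bounded purely in terms of $N$; because conjugation by an element of $G$ carries a commutator of $[G,G]$-elements to another such commutator, it is enough to bound $\cl_{[G,G]}$ of one elementary commutator $[a_i,b_j]$, where $\supp(a_i)$ and $\supp(b_j)$ lie in displaceable members of $\W$. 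This elementary estimate is exactly the displacement step in Ling's proof of Theorem~\ref{ling}, and the task is to verify that it produces a number of $[G,G]$-commutators depending only on the displacement data and not on $a_i,b_j$.

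The main obstacle is precisely this Key Lemma: converting a commutator of elements of $G$ with displaceable support into a bounded product of commutators of elements of $[G,G]$. The natural mechanism is the infinite swindle. If $u,v$ are supported in a set $U$ that is wandering under some $t$ (the sets $t^kU$, $k\in\Z$, pairwise disjoint), then $u=[A,t]$ with $A=\prod_{k\ge 0}t^kut^{-k}\in G$, and a telescoping of two such products expresses $[u,v]$ as a single commutator whose two entries are infinite products of conjugates lying in $[G,G]$. Two delicate points must be settled to make this rigorous and uniform. First, the swindle needs genuinely wandering towers rather than mere one-step displaceability, and it needs the pair of supports $\supp(a_i)$, $\supp(b_j)$ to sit inside a common wandering set; producing these from the bare non-fixing and factorizability hypotheses, absorbing the cross terms of the commutator expansion, is the central geometric input. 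Second, in order to land inside $[G,G]$ rather than merely in $G$ one must be able to choose the displacing element $t$ in $[G,G]$, which should follow by applying non-fixing and factorizability to $[G,G]$ itself but has to be checked against the supports in play. Once the Key Lemma is established with a uniform constant, the reduction via $\cl_G$, the fragmentation via $\frag^{\W}$, and the commutator bookkeeping combine to give the desired uniform bound $\cl_{[G,G]}\le KC$, and hence uniform perfectness of $[G,G]$.
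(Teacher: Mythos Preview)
Your overall architecture---bound $\cl_G$ by $K$, fragment each factor with respect to a displaceable cover, expand via the commutator identities, and then handle the elementary pieces by a displacement argument---is exactly the paper's. The paper also uses Lemma~3.3 (star refinement $\V\prec\U$) to dispose of your first ``delicate point'': after fragmenting with respect to $\V$, an elementary commutator $[a_i,b_j]$ is nontrivial only if the supports meet, and then both lie in a single $U\in\U$.

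The genuine gap is the mechanism you propose for the Key Lemma. The infinite swindle requires a wandering tower $\{t^kU\}_{k\ge 0}$ of pairwise disjoint translates, and this simply does not exist under the stated hypotheses: for compact $X$ no nonempty open set is wandering, yet the theorem applies there. So the swindle cannot be the engine. The paper instead uses a one-step displacement trick that needs only $h(U)\cap U=\emptyset$: if $f,g\in G_U$ and $h(U)\cap U=\emptyset$ then $hfh^{-1}$ commutes with $g$, whence the identity $[[h,f],g]=[f,g]$. Writing $h=[h_1,h_2]$ with $h_1,h_2\in[G,G]$, the right-hand side $[[h,f],g]$ unfolds via (3.1) into four commutators of elements of $[G,G]$, giving the bound $\cld_{[G,G]}\le 4d\rho^2$.

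Your second delicate point---arranging the displacing element to lie in $[G,G]$---is handled by Proposition~3.2: non-fixing plus factorizable implies locally moving, and then $[G,G]$ is $1$-non-fixing, so for each $x$ a single commutator $[h_1,h_2]$ with $h_1,h_2\in[G,G]$ already moves $x$. Shrinking gives the displaceable cover $\U$ with displacers of this specific form, and one then works with $\U^G$ to keep $G$-invariance. In short, replace the swindle by the identity $[[h,f],g]=[f,g]$ and use Proposition~3.2 plus star refinement to supply $h$ and the common $U$; the rest of your outline goes through unchanged.
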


The proof of Theorem 1.3 and further results concerning the
uniform perfectness of $[G,G]$ will be given in section 3.

Ling's theorem (Theorem 1.2) constitutes an essential amelioration
of the simplicity Epstein theorem \cite{ep} at least in two
aspects. First, contrary to \cite{ep}, it provides an algebraic
information on nontransitive homeomorphism groups.  Second, it
enables to strengthen the theorem of Epstein itself. We will
recall Epstein's theorem and Ling's improvement of it in section
4. Also in section 4 we formulate conditions which ensure the
uniform simplicity of $[G,G]$ (Theorem 4.3).

As usual $\tilde G$ stands for the universal covering group of
$G$. In section 5 we will prove the following

\begin{thm}
Suppose that  $G\leq\H(X)$ is isotopically factorizable (Def. 5.2)
and that $G_0$, the identity component of $G$, is non-fixing. Then
the commutator group $[\tilde G,\tilde G]$ is perfect.
\end{thm}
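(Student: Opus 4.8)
The plan is to lift Ling's theorem from $G_0$ to the covering group $\tilde G$, the only genuinely new difficulty being the central kernel $Z:=\pi_1(G)$. Recall that $p\colon\tilde G\to G_0$ is a central extension, so $Z$ is central in $\tilde G$ and, for any lifts $\tilde f,\tilde g$ of $f,g\in G_0$, the commutator $[\tilde f,\tilde g]$ is independent of the choice of lifts. Thus $(f,g)\mapsto[\tilde f,\tilde g]$ is well defined, $p$ restricts to a surjection $[\tilde G,\tilde G]\to[G_0,G_0]$, and I may speak of the canonical lift of a commutator and, by taking products, of the canonical lift of any element of $[G_0,G_0]$. Since isotopic factorizability (Def. 5.2) forces $G_0$ to be factorizable and $G_0$ is assumed non-fixing, Ling's theorem (Theorem~\ref{ling}) applies and $[G_0,G_0]$ is perfect.

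First I record that isotopic factorizability provides \emph{canonical supported lifts}: if $\supp(f)\subset U$ then the isotopy witnessing $f$ may be chosen supported in $U$, and for elements with pairwise disjoint supports the canonical lift of the product is exactly the product of the canonical lifts. Two consequences are then exact, i.e. hold in $\tilde G$ with no central error: the standard commutator expansions $[\tilde a_1\tilde a_2,\tilde b]=\tilde a_1[\tilde a_2,\tilde b]\tilde a_1^{-1}\,[\tilde a_1,\tilde b]$ together with their $\tilde b$-analogues, and the vanishing $[\tilde a,\tilde b]=e$ whenever $\supp(a)\cap\supp(b)=\varnothing$. Likewise the displacement swindle lifts exactly: if $h\in[\tilde G,\tilde G]$ is a canonical lift supported in a set $V$ that some $g\in G_0$ displaces with locally finite, pairwise disjoint iterates $g^nV$, then $\tilde H:=\prod_{n\ge0}\tilde g^{\,n}h\,\tilde g^{-n}$ is a well-defined element of $[\tilde G,\tilde G]$ and $h=[\tilde H,\tilde g]$ holds on the nose, because every rearrangement used in the swindle merely permutes disjointly supported factors.

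With these tools in hand the perfectness is proved on generators. Given a generator $[\tilde a,\tilde b]$ of $[\tilde G,\tilde G]$, I use isotopic factorizability to write $\tilde a=\prod_i\tilde a_i$ and $\tilde b=\prod_k\tilde b_k$ as products of canonical supported lifts, for a cover fine enough that whenever two of the occurring supports meet they lie in a common displaceable set. Expanding $[\tilde a,\tilde b]$ by the exact commutator identities and deleting the exactly trivial disjoint-support terms reduces matters to commutators $[\tilde a_i,\tilde b_i]$ localized in a displaceable set $V_i$. Such a commutator lies in $[\tilde G,\tilde G]$ and projects to $[a_i,b_i]\in[G_0,G_0]$. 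Here the swindle by itself is not enough: the displacing element is not a commutator, so it only places $[\tilde a_i,\tilde b_i]$ in $[[\tilde G,\tilde G],\tilde G]$. To reach the double commutator I invoke Ling's theorem for $G_0$, rewriting $[a_i,b_i]$ as a product of commutators of elements of $[G_0,G_0]$ supported in displaceable sets, and then lift canonically; the exact swindle realizes each displaceable supported factor as a genuine commutator in $[\tilde G,\tilde G]$, so this product of canonical lifts lies in $[[\tilde G,\tilde G],[\tilde G,\tilde G]]$ and agrees with $[\tilde a_i,\tilde b_i]$ up to a central factor in $Z\cap[\tilde G,\tilde G]$.

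The main obstacle is precisely this residual central discrepancy: what remains is to show $Z\cap[\tilde G,\tilde G]\subseteq[[\tilde G,\tilde G],[\tilde G,\tilde G]]$. A central $z$ cannot be attacked by the swindle directly, since conjugation fixes it; instead I represent the loop in $G_0$ giving $z$ through an isotopic factorization into supported paths, and I track the failure of Ling's relation to lift as a loop in $G_0$ that is a product of loops supported in displaceable sets. On such pieces the loop-level swindle applies: a loop $\ell$ supported in a set with disjoint locally finite iterates is null-homotopic, since conjugation by the fixed displacing element acts trivially on $\pi_1(G_0)$ and the infinite-product loop $L$ satisfies $[L]=[\ell]+[L]$ in the abelian group $\pi_1(G_0)$, forcing $[\ell]=0$. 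The delicate point — and the crux of the whole argument — is that a naive factorization of a loop yields paths rather than loops, so closing these pieces up into genuinely displaceable-supported loops without introducing new central terms is exactly what the isotopic factorizability hypothesis is engineered to guarantee; granting this, every central discrepancy is killed and $[\tilde G,\tilde G]=[[\tilde G,\tilde G],[\tilde G,\tilde G]]$.
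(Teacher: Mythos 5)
There is a genuine gap, and in fact the whole strategy diverges from the one that works. Your plan is to prove perfectness of $[G_0,G_0]$-type relations downstairs and then repair the central error term in $Z=\pi_1(G)$; you correctly identify that the entire difficulty is then concentrated in showing $Z\cap[\tilde G,\tilde G]\subseteq[[\tilde G,\tilde G],[\tilde G,\tilde G]]$, but this step is never actually carried out --- you write ``granting this, every central discrepancy is killed,'' and the thing being granted (that a loop in $G_0$ can be factored into genuine \emph{loops} supported in displaceable sets, not merely paths) does not follow from Definition 5.2, which only factors isotopies into supported isotopies and says nothing about closing up the factors. Two further steps would fail as stated. First, the ``canonical supported lift'' of an element with $\supp(f)\subset U$ is not well defined: two isotopies supported in $U$ ending at $f$ need not be homotopic rel endpoints, so there is no canonical choice. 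Second, the infinite displacement swindle $\tilde H=\prod_{n\ge0}\tilde g^{\,n}h\tilde g^{-n}$ is unavailable here: $G$ is an abstract subgroup of $\H(X)$, so an infinite product of its elements need not lie in $G$ (let alone in $[\tilde G,\tilde G]$), and the hypotheses (non-fixing plus factorizability) only produce, via Proposition 3.2, a single commutator displacing a small set --- not an element with locally finite pairwise disjoint iterates. The paper never uses an infinite swindle in this context; it only uses the finite identity $[[h,f],g]=[f,g]$ when $h(U)\cap U=\emptyset$.

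The paper's proof avoids the central extension entirely by working in the path group $\P G$ rather than in $\tilde G$. The reparametrization operators $\star$ (making a path trivial on $[0,\tfrac12]$) and $\square$ (making it constant on $[\tfrac12,1]$) of Lemma 5.1 let one choose, within fixed homotopy classes, a displacing path $\hat f=[f,\bar f]$ that has already reached the displacing homeomorphism by time $\tfrac12$, and supported paths $k,\bar k\in\P^{\star}G_U$ that do not move before time $\tfrac12$. Then $\hat f_t(U)\cap U=\emptyset$ exactly when $k_t,\bar k_t$ are nontrivial, so $[\hat f k\hat f^{-1},\bar k]=\id$ and $[[\hat f,k],\bar k]=[k,\bar k]$ hold \emph{on the nose as identities of paths}, hence descend to $\tilde G$ with no central correction to track. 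If you want to salvage your approach you would need to either prove the loop-factorization claim (which is essentially a new and unjustified hypothesis) or switch to the path-level argument.
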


In section 6 we will consider the case of a noncompact manifold
$M$ such that  $M$ is the interior of a compact manifold $\bar M$,
and groups of homeomorphisms on $M$ with no restriction on
support. Consequently such groups are not factorizable in the
usual way but only in a wider sense (Def. 6.1).  It is surprising
that for a large class of homeomorphism or diffeomorphism groups
of an open manifold the assertions of Theorems 1.2 and 1.3 still
hold (see Theorems 6.9 and 6.10).

 In the
final section we will present some examples and open problems
which are of interest in the context of the  above results.

{\bf Acknowledgments.} A correspondence with Paul Schweitzer  and
his recent paper \cite{Sch} were helpful when we were preparing
section 6. We would like to thank him very much for his kind help.

\section{Conjugation-invariant norms}

The notion of the conjugation-invariant norm is a basic tool in
studies on the structure of groups.  Let $G$ be a group. A
\wyr{conjugation-invariant norm} (or \emph{norm} for short) on $G$
is a function $\nu:G\r[0,\infty)$ which satisfies the following
conditions. For any $g,h\in G$ \begin{enumerate} \item $\nu(g)>0$
if and only if $g\neq e$; \item $\nu(g^{-1})=\nu(g)$; \item
$\nu(gh)\leq\nu(g)+\nu(h)$; \item $\nu(hgh^{-1})=\nu(g)$.
\end{enumerate}
Recall that a group is called \emph{ bounded} if it is bounded
with respect to any bi-invariant metric. It is easily seen that
$G$ is bounded if and only if any conjugation-invariant norm on
$G$ is bounded.

Observe that the commutator length $\cl_G$ is a
conjugation-invariant norm on $[G,G]$. In particular, if $G$ is a
perfect group then $\cl_G$  is a conjugation-invariant norm on
$G$. For any perfect group $G$ denote by $\cld_G$ the commutator
length diameter of $G$, i.e. $\cld_G:=\sup_{g\in G}\cl_G(g)$. Then
$G$ is uniformly perfect iff $\cld_G<\infty$.

Assume now that $G\leq\H(X)$ is $\U$-factorizable (Def.1.1), and
that $\U$ is a $G$-invariant open cover of $X$. The latter means
that $g(U)\in\U$ for all $g\in G$ and $U\in\U$. Then we may
introduce the following conjugation-invariant norm $\frag^{\U}$ on
$G$. Namely, for $g\in G$, $g\neq\id$, we define $\frag^{\U}(g)$
to be the least integer $\rr>0$ such that $g=g_1\ldots g_{\rr}$
with $\supp(g_i)\s U_i$ for some $U_i\in\U$, where $i=1\ld \rr$.
By definition $\frag^{\U}(\id)=0$.

Define $\fragd^{\U}_G:=\sup_{g\in G}\frag^{\U}(g)$, the diameter
of $G$ in $\frag^{\U}$. Consequently, $\frag^{\U}$ is bounded iff
$\fragd^{\U}_G<\infty$.

Observe that $\frag^{\{X\}}$ is the trivial norm on $G$, i.e.
equal to 1 for all $g\in G\setminus\{\id\}$. Observe as well that
$\frag^{\V}\geq\frag^{\U}$ provided $\V$ is finer than $\U$.

The significance of $\frag^{\U}$ consists in the following version
of Proposition 1.15 in \cite{bip}.
\begin{prop}
Let $M$ be a $C^r$-manifold, $r=0,1\ld\infty$. Then $\D^r_c(M)$ is
bounded if and only if $\D^r_c(M)$ is bounded with respect to
$\frag^{\U}$, where $\U$ is some cover by embedded open balls.
\end{prop}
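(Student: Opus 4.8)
The plan is to use the characterization recalled just above, that a group is bounded if and only if every conjugation-invariant norm on it is bounded. Throughout I would write $G=\D^r_c(M)$, set $n=\dim M$, and fix a $G$-invariant cover $\U$ by embedded open balls; such a cover exists because on each component of $M$ the compactly supported diffeomorphisms isotopic to the identity act transitively on embedded balls, so one may take $\U$ to be the $G$-orbit of a suitable family of balls. With this choice $\frag^{\U}$ is a genuine conjugation-invariant norm on $G$, the $G$-invariance of $\U$ being exactly what is needed to verify property (4). The forward implication is then immediate: if $G$ is bounded, every conjugation-invariant norm on $G$ is bounded, and in particular so is $\frag^{\U}$.

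For the converse I would assume $\frag^{\U}$ is bounded and put $N:=\fragd^{\U}_G<\infty$. Let $\nu$ be an arbitrary conjugation-invariant norm on $G$; the goal is to bound it. Given $g\in G$, boundedness of $\frag^{\U}$ yields a factorization $g=g_1\cdots g_s$ with $s\le N$ and $\supp(g_i)\s U_i$ for some $U_i\in\U$. By subadditivity,
\[
\nu(g)\le\sum_{i=1}^s\nu(g_i)\le N\cdot\sup_{U\in\U}\ \sup_{h\in G_U}\nu(h),
\]
so it suffices to bound $\nu$ uniformly over the subgroups $G_U$, $U\in\U$.

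Fixing a ball $U\in\U$, I would identify $G_U$ with $\D^r_c(\R^n)$, since $U$ is an embedded open ball. The restriction $\nu|_{G_U}$ is a conjugation-invariant norm on $G_U$, because conjugation by elements of $G_U\le G$ leaves $\nu$ invariant. The crucial input, which is the content of Proposition 1.15 of \cite{bip}, is that $\D^r_c(\R^n)$ is bounded: $\R^n$ is portable, so an infinite-swindle argument displacing any compact support to infinity shows that every conjugation-invariant norm on $\D^r_c(\R^n)$ is bounded. Hence $\sup_{h\in G_U}\nu(h)=:C<\infty$. Because $\U$ is $G$-invariant and any two balls of $\U$ in the same component of $M$ are conjugate under $G$ by the disc theorem, conjugation-invariance of $\nu$ forces this supremum to be the same constant $C$ for every $U$. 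Combining with the previous display gives $\nu(g)\le NC$ for all $g\in G$, so $\nu$ is bounded and therefore $G$ is bounded.

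The hard part is the boundedness of $\D^r_c(\R^n)$ underpinning the third paragraph; everything else is bookkeeping with the norm axioms. A secondary point needing care is the uniformity of $C$ across balls lying in distinct components of $M$, which I would handle by noting that a single compactly supported element meets only finitely many components and that the swindle bound for $\D^r_c(\R^n)$ depends only on $n$.
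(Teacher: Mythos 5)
Your proof is correct and follows essentially the same route as the paper, which disposes of the proposition in two lines by citing Theorem 1.18 of \cite{bip} (boundedness of $\D^r_c$ of a portable manifold) together with the portability of $\R^n$, the forward direction being immediate from the definition of boundedness. Your write-up simply makes explicit the bookkeeping (subadditivity over a length-$N$ fragmentation, conjugacy of balls, uniformity of the constant) that the paper leaves implicit.
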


Indeed, it is a consequence of Theorem 1.18 in \cite{bip} stating
that for a portable manifold $M$ the group $\D^r_c(M)$ is bounded,
and the fact that $\R^n$ is portable.

\section{Uniform perfectness of $[G,G]$}

In Theorems 3.5 and 3.8 below we also need  stronger notions than
that of non-fixing group (Def. 1.1).

\begin{dff}  Let $\U$ be an open cover of
$X$, $G\leq\H(X)$ and let $r\in\mathbb N$. \begin{enumerate} \item
$G$ is called \emph{$r$-non-fixing} if for any $x\in X$ there are
$f_1\ld f_r,g_1\ld g_r\in G$ (possibly $=\id$) such that
$([f_r,g_r]\ldots[f_1,g_1])(x)\neq x$. \item $G$ is said to be
\emph{$\U$-moving} if for every $U\in\U$ then there is $g\in G$
such that $g(U)\cap U=\emptyset$. \item $G$ is said to be
\emph{$r$-$\U$-moving} if for any $U\in \U$  there are $2r$
elements of $G$ (possibly $=\id$), say $f_1\ld f_r, g_1\ld g_r$,
such that the sets $U$ and $([f_r,g_r]\ldots[f_1,g_1])(U)$ are
disjoint. \item $G$ is said to be \emph{strongly $\U$-moving} if
for every $U, V\in\U$ there is $g\in G$ such that $g(U)\cap (U\cup
V)=\emptyset$. \item $G$ is called \emph{locally moving} if for
any open set $U\s X$ and $x\in U$ there is $g\in G_U$ such that
$g(x)\neq x$.
\end{enumerate}
\end{dff}

Of course, if $G$ is either $r$-non-fixing, or $\U$-moving, or
locally moving then it is non-fixing. Likewise, if $G$ is
$r$-$\U$-moving then it is $s$-$\U$-moving for $r<s$ and
$\U$-moving. Notice that if $\V$ is finer than $\U$ and $G$ is
(resp. strongly) $\U$-moving then $G$ is (resp. strongly)
$\V$-moving.

\begin{prop} Let $X$ be paracompact and let
 $G\leq\H(X)$.\begin{enumerate} \item
  If $G$ is non-fixing  and factorizable (Def. 1.1)  then
$G$ is locally moving. \item If $G$ is locally moving then so is
$[G,G]$. \item If $G$ is non-fixing  and factorizable then $[G,G]$
is $1$-non-fixing (Def. 3.1(1)).
\end{enumerate}
\end{prop}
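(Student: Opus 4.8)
The plan is to treat the three assertions in order, obtaining (3) by combining (1) and (2). Throughout I will assume, as is standard in this setting, that $X$ is Hausdorff, so that points are closed and distinct points admit disjoint open neighbourhoods; both facts are used below.

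For (1), fix an open set $U\s X$ and a point $x\in U$, and argue by contradiction: suppose every element of $G_U$ fixes $x$. I would apply factorizability to the open cover $\U=\{U,\,X\setminus\{x\}\}$, which is open since $\{x\}$ is closed. Given an arbitrary $g\in G$, write $g=g_1\ldots g_r$ with each $\supp(g_i)$ contained in $U$ or in $X\setminus\{x\}$. A factor supported in $X\setminus\{x\}$ fixes $x$ because $x\notin\supp(g_i)$, while a factor supported in $U$ lies in $G_U$ and fixes $x$ by the contradiction hypothesis; hence every factor fixes $x$, so $g(x)=x$. As $g$ was arbitrary this forces $G(x)=\{x\}$, contradicting that $G$ is non-fixing. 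Therefore some $g\in G_U$ moves $x$, and $G$ is locally moving.

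The core of the argument is (2), and its computation will be reused in (3). Fix $U$ and $x\in U$; since $G$ is locally moving there is $g\in G_U$ with $y:=g(x)\neq x$, and $y\in U$ because $\supp(g)\s U$ forces $g(U)=U$. Using the Hausdorff property I would separate $x$ and $y$ and choose an open $W$ with $x\in W\s U$ and $g(W)\cap W=\emptyset$ (take $W=A\cap g^{-1}(B)\cap U$ for disjoint neighbourhoods $A\ni x$, $B\ni y$). Local moving again yields $h\in G_W$ with $h(x)\neq x$; put $S=\supp(h)\s W$. Since $g(S)\cap S\s g(W)\cap W=\emptyset$, the conjugate $ghg^{-1}$ is supported in $g(S)$, disjoint from $S$, and a direct evaluation of $[g,h]=ghg^{-1}h^{-1}$ at $x\in S$ gives $[g,h](x)=h^{-1}(x)\neq x$. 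Finally $\supp([g,h])\s S\cup g(S)\s W\cup g(W)\s U$, so $[g,h]\in[G,G]_U$ moves $x$, whence $[G,G]$ is locally moving.

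For (3) I would first combine (1) and (2): a non-fixing factorizable $G$ is locally moving by (1), and then $[G,G]$ is locally moving by (2). It then suffices to run the computation of (2) with the group $[G,G]$ in place of $G$ and with $U=X$: for each $x$ it produces a mover $a\in[G,G]$ of $x$ and an element $b\in[G,G]_W\s[G,G]$ with $b(x)\neq x$ and $g(W)\cap W=\emptyset$ for $g=a$, so that $[a,b](x)=b^{-1}(x)\neq x$; taking $f_1=a$ and $g_1=b$ exhibits $[G,G]$ as $1$-non-fixing. The only delicate point I anticipate is the separation step in (2), namely producing a neighbourhood $W$ of $x$ inside $U$ with $g(W)\cap W=\emptyset$, which is exactly where the Hausdorff hypothesis enters; the remainder is the disjoint-support commutator identity together with routine bookkeeping on supports.
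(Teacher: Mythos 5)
Your proof is correct and follows essentially the same route as the paper's: part (1) by factorizing over a two-set open cover separating $x$ (yours by contradiction with $\{U,\,X\setminus\{x\}\}$, the paper's by a minimal-index selection from the factorization), and parts (2)--(3) by producing a commutator supported in $U$ that moves $x$ and then applying that computation to $[G,G]$ itself. The only cosmetic difference is that you arrange full disjointness $g(W)\cap W=\emptyset$ where the paper merely takes $V\ni x$ with $g(x)\notin V$; both devices yield an element of $[G,G]_U$ moving $x$.
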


\begin{proof} (1) Let $x\in U$ and $g\in G$ such that $g(x)=y\neq x$.
Choose $\U=\{U_1, U_2\}$, where $x\in U_1\setminus U_2$, $y\in
U_2\setminus U_1$, $U_1\s U$ and $X=U_1\cup U_2$. By assumption
 we may write $g=g_r\ldots g_1$,
where all $g_i$ are supported in elements of $\U$. Let
$s:=\min\{i\in\{1\ld r\}:\;  \supp(g_i)\s U_1 \text{\; and\;}
g_i(x)\neq x\}$. Then $g_s\in G_U$ satisfies $g_s(x)\neq x$.

(2) Let $x\in U$. There is $g\in G_U$ with $g(x)\neq x$. Take an
open $V$ such that $x\in V\s U$ and $g(x)\not\in V$. Choose $f\in
G_V$ with $f(x)\neq x$. It follows that $f(g(x))=g(x)\neq g(f(x))$
and, therefore, $[f,g](x)\neq x$. (3) follows from (1) and the
proof of (2).
\end{proof}

The following property of paracompact spaces is well-known.
\begin{lem}\label{cover}
If $X$ is a paracompact space and $\U$ is an open cover of $X$,
then there exists an open cover $\V$ star finer than $\U$, that is
 for all $V\in \V$  there
is $U\in\U$ such that $\sta^{\V}(V)\s U$. Here
$\sta^{\V}(V):=\bigcup\{V'\in\V:\; V'\cap V\neq\emptyset\}$. In
particular, for all $V_1, V_2\in \V$ with $V_1\cap
V_2\neq\emptyset$ there is $U\in\U$ such that $V_1\cup V_2\subset
U$.
\end{lem}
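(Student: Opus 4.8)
The statement to prove is precisely the existence of a \emph{star refinement} of an arbitrary open cover of a paracompact space; this is one of the classical characterizations of paracompactness (A.~H.\ Stone's theorem that paracompact spaces are fully normal), so one option is simply to cite it. To reconstruct it, the plan is to pass through a \emph{barycentric} refinement and then iterate. First I would use paracompactness to replace $\U$ by a locally finite open refinement $\W=\{W_s\}_{s\in S}$ with $W_s\s U_{a(s)}$ for some $U_{a(s)}\in\U$. Since a paracompact (Hausdorff) space is normal, the usual shrinking lemma applies to the locally finite cover $\W$, yielding an open cover $\{V_s\}_{s\in S}$ with $\overline{V_s}\s W_s$ for every $s$.

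Next I would build a barycentric refinement $\mathcal O$ of $\U$. For $x\in X$ put $S(x):=\{s\in S:\ x\in\overline{V_s}\}$, which is nonempty (the $V_s$ cover $X$) and finite (local finiteness), and define
\[
O_x:=\Big(\bigcap_{s\in S(x)}W_s\Big)\setminus\bigcup\{\overline{V_s}:\ s\notin S(x)\}.
\]
Here $\bigcup\{\overline{V_s}:\ s\notin S(x)\}$ is closed because $\{\overline{V_s}\}$ is locally finite, and it omits $x$ by the definition of $S(x)$; hence $O_x$ is open and contains $x$, so $\{O_x\}_{x\in X}$ is an open cover. The key point to verify is that it barycentrically refines $\W$: given $y\in X$, fix any $s_0\in S(y)$, and suppose $y\in O_x$. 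Then every $s$ with $y\in\overline{V_s}$ lies in $S(x)$; in particular $s_0\in S(x)$, whence $O_x\s\bigcap_{s\in S(x)}W_s\s W_{s_0}$. Thus the point-star $\bigcup\{O_x:\ y\in O_x\}$ is contained in $W_{s_0}\s U_{a(s_0)}$, which is the barycentric refinement property.

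Finally I would upgrade this to a genuine star refinement by iterating: take a barycentric refinement $\mathcal O$ of $\U$ and then a barycentric refinement $\V$ of $\mathcal O$. The standard ``barycentric-of-barycentric is star'' argument then gives, for each $V\in\V$, fixing $x\in V$: any $V'\in\V$ meeting $V$ shares a point $y$ with $V$, so $V\cup V'$ lies in the point-star of $y$ in $\V$, hence in a single $O\in\mathcal O$ containing $x$; that $O$ lies in the point-star of $x$ in $\mathcal O$, hence in some $U\in\U$. Therefore $\sta^{\V}(V)\s U$, as required. The ``in particular'' clause is then immediate: if $V_1\cap V_2\neq\emptyset$ then both $V_1$ and $V_2$ lie in $\sta^{\V}(V_1)\s U$, so $V_1\cup V_2\s U$. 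The main obstacle is the bookkeeping in the $O_x$ construction—checking openness, the covering property, and especially the containment of point-stars—together with the background facts (paracompact $\Rightarrow$ normal, local finiteness of closures, and the shrinking lemma) that make the argument go through.
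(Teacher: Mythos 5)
Your proof is correct. The paper itself gives no argument here --- it simply states the lemma as a well-known property of paracompact spaces (it is A.~H.\ Stone's theorem that paracompact Hausdorff spaces are fully normal) --- and your reconstruction via a locally finite refinement, the shrinking lemma, the barycentric refinement $\{O_x\}$, and the standard ``barycentric of barycentric is star'' iteration is exactly the classical proof being invoked; the only points worth flagging are that normality (hence the shrinking lemma) requires the Hausdorff convention for paracompactness, and that the degenerate case $V=\emptyset$ in the final step is trivial since then $\sta^{\V}(V)=\emptyset$.
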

If $\V$ and $\U$ are as in Lemma 3.3 then we will write
$\V\prec\U$.

For an open cover $\U$ let $\U^G:=\{g(U):\; g\in G \text{\; and
\;} U\in\U\}$.

\bigskip \emph{Proof of Theorem 1.3.} In view of Proposition 3.2
and the assumption, for any $x\in X$ there is $f,g\in[G,G]$ such
that $[f,g](x)\neq x$. It follows the existence of an open cover
$\U$ such that for any $U\in\U$ there are $f,g\in[G,G]$ such that
$[f,g](U)\cap U=\emptyset$. Hence we have also that for any
$U\in\U^G$ there is $f,g\in[G,G]$ such that $[f,g](U)\cap
U=\emptyset$. In fact, if $N\lhd G$ and $U\in\U$ such that
$n(U)\cap U=\emptyset$ for some $n\in N$, then for $g\in G$ we get
$(\bar ng)(U)\cap g(U)=\emptyset$, where $\bar n=gng^{-1}\in N$.

Due to Lemma 3.3  we can find $\V$ such that $\V\prec\U$. We
denote
\begin{equation*}G^{\U}=\prod\limits_{U\in \U^G}[G_U,G_U].\end{equation*}
Assume   that $G$ is $\V$-factorizable and $\fragd^{\V}_G= \rr$.
 First we show that
$[G,G]\s G^{\U}$ and that any $[g_1,g_2]\in[G,G]$ can be expressed
as a product of at most $\rr^2$  elements of $G^{\U}$ of the form
$[h_1,h_2]$, where $h_1,h_2\in G_U$ for some $U$. In fact, it is
an immediate consequence of the following commutator formulae for
all $f,g,h\in G$
\begin{equation}
[fg,h]=f[g,h]f^{-1}[f,h],\quad [f,gh]=[f,g]g[f,h]g^{-1},
\end{equation}
and the fact that  $\V\prec\U$. Now if $\cld_G=d$, then every
element of $[G,G]$ is a product of at most $d\rr^2$ elements of
$G^{\U}$ of the form $[h_1,h_2]$, where $h_1,h_2\in G_U$ for some
$U$.

 Next, fix arbitrarily $U\in \U$. We have to show that for
every $f,g\in G_U$ the bracket $[f,g]$ can be represented as a
product of four commutators of   elements of $[G,G]$.
%In view of Proposition ?? the group $[G,G]$ is strongly $\V$-moving.
By assumption on $\U^G$, there are $h_1,h_2\in [G,G]$ such that
$h(U)\cap U=\emptyset$, where $h=[h_1,h_2]$. It follows that
$[hfh^{-1}, g]=\id$. Therefore, $[[h,f],g]=[f,g]$. Observe that
indeed $[[h,f],g]$ is a product of four commutators of   elements
of $[G,G]$. Thus any element of $[G,G]$ is a product of at most
$4d\rr^2$ commutators of elements of $[G,G]$. \quad $\square$

\medskip

\begin{cor}
Let $X$ be a paracompact    space and let $G\leq\H(X)$ be a
bounded, factorizable and non-fixing group. Then the commutator
subgroup $[G,G]$ is uniformly perfect.
\end{cor}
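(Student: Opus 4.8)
The plan is to deduce the corollary directly from Theorem 1.3, whose conclusion is precisely the assertion we want. Since $G$ is assumed factorizable and non-fixing outright, the only work is to check that the two extra hypotheses of Theorem 1.3 are forced by the single word ``bounded'': namely that $G$ is bounded with respect to every fragmentation norm $\frag^{\U}$, and that $\cl_G$ is bounded on $[G,G]$.

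The first of these is immediate from the characterisation recalled in Section 2. For a $G$-invariant open cover $\U$ the function $\frag^{\U}$ is, as set up there, a conjugation-invariant norm on $G$; since $G$ is bounded, every conjugation-invariant norm on $G$ is bounded, so $\fragd^{\U}_G<\infty$ for each such $\U$. This supplies the fragmentation hypothesis of Theorem 1.3 at no cost.

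The heart of the matter is the second hypothesis, boundedness of the commutator length $\cl_G$ on $[G,G]$. Here I would first invoke Ling's theorem (Theorem 1.2) to know that $[G,G]$ is perfect, so that $\cl_G$ takes finite values on all of $[G,G]$ and is a genuine conjugation-invariant norm on the group $[G,G]$ (indeed invariant under conjugation by all of $G$). It then remains to see that this norm is bounded, and the natural route is to apply the Section 2 equivalence ``bounded $\Leftrightarrow$ every conjugation-invariant norm is bounded'' to $[G,G]$ in place of $G$, which would give $\cld_G<\infty$ at once.

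The step I expect to be the main obstacle is exactly this passage from boundedness of $G$ to boundedness of $\cl_G$ on the proper subgroup $[G,G]$. Boundedness is by definition a statement about conjugation-invariant norms on the \emph{whole} group $G$, and the naive attempt to extend $\cl_G$ to a finite conjugation-invariant norm on $G$ fails the triangle inequality on cosets lying outside $[G,G]$. One might instead hope to bound $\cl_G(g)$ through a fragmentation $g=g_1\ldots g_{\rr}$ with $\supp(g_i)\s U_i$ and $\rr\le\fragd^{\U}_G$, but a single fragment $g_i$ need not itself lie in $[G,G]$, and turning such a locally supported element into a bounded number of commutators usually relies on an infinite \emph{swindle}, which is unavailable here because factorizability only yields \emph{finite} products. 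I would therefore try to show directly that $[G,G]$ inherits boundedness from $G$, exploiting the non-fixing and factorizable structure (Proposition 3.2 together with the moving elements $[f,g]\in[G,G]$ produced in the proof of Theorem 1.3) rather than any abstract norm-extension. Once $\cld_G<\infty$ is secured, both hypotheses of Theorem 1.3 hold and the uniform perfectness of $[G,G]$ follows immediately.
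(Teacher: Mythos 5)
Your reduction to Theorem 1.3 is exactly the paper's strategy, and the first half is fine: for a $G$-invariant cover $\U$ (which is all the proof of Theorem 1.3 actually uses) the fragmentation norm $\frag^{\U}$ is a conjugation-invariant norm on $G$, so boundedness of $G$ makes all these norms bounded. You have also correctly located the only real issue, namely why boundedness of $G$ forces $\cld_G=\sup_{g\in[G,G]}\cl_G(g)<\infty$, and you are right that this does not follow formally by applying the definition of boundedness to the subgroup $[G,G]$.

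But at precisely this point your proposal stops being a proof: you write that you would ``try to show directly that $[G,G]$ inherits boundedness from $G$, exploiting the non-fixing and factorizable structure,'' without carrying out any such argument. The paper closes this gap by quoting Proposition 1.4 of Burago--Ivanov--Polterovich, which is a purely algebraic statement about abstract groups: if $G$ is bounded, then the commutator length $\cl_G$ is bounded on the commutator subgroup $[G,G]$. No topology, factorizability or non-fixing enters this step --- Theorem 1.3 already encapsulates all the dynamical work, and the input needed here is exactly this algebraic lemma, not more dynamics. So the missing ingredient is a concrete external result rather than a routine verification, and as written the key hypothesis of Theorem 1.3 is asserted, not established. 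A minor further point: Ling's theorem is not needed to make $\cl_G$ finite-valued on $[G,G]$; that is automatic from the definition of the commutator subgroup (every element is a finite product of commutators of elements of $G$). Ling's theorem concerns $\cl_{[G,G]}$, which plays no role in verifying the hypotheses of Theorem 1.3.
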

\begin{proof}
The only thing we need is that $\cl_G$ should be bounded (on
$[G,G]$), and this fact is a consequence of Proposition 1.4 in
\cite{bip}.
\end{proof}

A more refined version of Theorem 1.3 is the following

\begin{thm}
Let $X$ be a paracompact   topological space, let $G\leq\H(X)$
with $\cl_G$ bounded (as the norm on $[G,G]$) and let $\U$ be a
$G$-invariant open cover of $X$ such that
\begin{enumerate} \item $G$ is strongly $\U$-moving (Def. 3.1(4)), and \item  there is an open
cover $\V$  satisfying $\V\prec\U$   such that $G$ is
$\V$-factorizable and $G$ is bounded with respect to the
fragmentation norm $\frag^{\V}$.\end{enumerate} Then the
commutator subgroup $[G,G]$ is uniformly perfect. Furthermore, if
$\fragd^{\V}_G= \rr$ and $\cld_G=d$ then $\cld_{[G,G]}\leq
d\rr^2$.
\end{thm}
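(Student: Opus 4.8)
The plan is to mirror the proof of Theorem 1.3, sharpening each fragmentation/commutator estimate so as to track the exact constant $d\rr^2$, and to replace the crude displacement argument by the stronger hypothesis (1). Throughout I work with the $G$-invariant cover $\U$ and the star-refinement $\V\prec\U$ supplied by hypothesis (2). The overall strategy is: first bound the commutator length of an arbitrary element of $[G,G]$ in terms of products of \emph{local} commutators $[h_1,h_2]$ with $h_1,h_2\in G_U$; then show each such local commutator is itself a single commutator of elements of $[G,G]$ (rather than a product of four, as in Theorem 1.3); and finally multiply the two bounds.

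First I would establish the factorization bound. Since $\fragd^{\V}_G=\rr$, every $g\in G$ factors as $g=g_1\cdots g_{\rr}$ with each $\supp(g_i)$ contained in some $V_i\in\V$. Applying the commutator identities (2.1) to $[g_1,g_2]$ and using $\V\prec\U$ (so that any two intersecting members of $\V$ lie in a common $U\in\U$), each commutator $[g_1,g_2]$ with $g_1,g_2\in G$ rewrites as a product of at most $\rr^2$ local commutators $[h_1,h_2]$ with $h_1,h_2\in G_U$ for a single $U\in\U$. Because $\cld_G=d$, a general element of $[G,G]$ is a product of at most $d$ commutators, hence a product of at most $d\rr^2$ such local commutators. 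This is the routine bookkeeping part and should go through exactly as in the proof of Theorem 1.3.

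The crux is the second step: for fixed $U\in\U$ and $f,g\in G_U$, I must express $[f,g]$ as a \emph{single} commutator of elements of $[G,G]$. This is where hypothesis (1), strong $\U$-moving, enters and does more than the plain displacement used before. Given $f,g\in G_U$, I would pick $V\in\U$ large enough to contain the relevant supports and invoke strong $\U$-moving to find $h\in G$ with $h(U)\cap(U\cup V)=\emptyset$. The point of displacing $U$ away from \emph{both} $U$ and an auxiliary set is that $f$, $g$, and their $h$-conjugates then have pairwise disjoint supports, so that $hfh^{-1}$ and $hgh^{-1}$ commute with $f$ and $g$; a standard commutator-calculus computation (as in the $[[h,f],g]=[f,g]$ trick) then collapses a would-be product of several commutators into one. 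Concretely, with supports separated one checks an identity of the form $[f,g]=[\,a,\,b\,]$ where $a,b$ are built from $f,g$ and $h$ and lie in $[G,G]$, exploiting that the displaced copies commute. I expect this identity-engineering — getting the count down from four to one by using the \emph{strong} (double) displacement rather than a single displacement — to be the main obstacle, since it requires choosing the auxiliary conjugators so that \emph{all} the cross-terms cancel simultaneously.

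Combining the two steps gives the bound: every element of $[G,G]$ is a product of at most $d\rr^2$ local commutators, and each local commutator is a single commutator of elements of $[G,G]$, so $\cld_{[G,G]}\leq d\rr^2$. Since $d\rr^2<\infty$, this yields that $[G,G]$ is uniformly perfect and establishes the stated explicit bound. One subtlety I would keep an eye on is that all conjugations stay inside the relevant subgroups: $G$-invariance of $\U$ guarantees that conjugating $G_U$ by $h\in G$ lands in $G_{h(U)}$ with $h(U)\in\U$, which keeps the local structure intact and lets the displacement argument be applied uniformly across all $U\in\U$.
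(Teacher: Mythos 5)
Your first step --- the count showing every element of $[G,G]$ is a product of at most $d\rr^2$ local commutators $[h_1,h_2]$ with $h_1,h_2\in G_U$, via (3.1), $\V\prec\U$ and $G$-invariance of $\U$ --- is exactly the paper's. The gap is in the second step, which you correctly identify as the crux but do not resolve, and the mechanism you sketch does not work as stated. You propose a \emph{single} displacement: choose $V\in\U$ in advance and find one $h$ with $h(U)\cap(U\cup V)=\emptyset$, claiming that then ``$f$, $g$, and their $h$-conjugates have pairwise disjoint supports.'' That is false: $hf^{-1}h^{-1}$ and $hg^{-1}h^{-1}$ are both supported in $h(U)$, so they need not commute with each other, and the one-displacement computation gives $[[f,h],[g,h]]=[f,g]\cdot h[f^{-1},g^{-1}]h^{-1}$, i.e.\ it picks up the nonvanishing cross-term $[hf^{-1}h^{-1},hg^{-1}h^{-1}]$. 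Moreover an auxiliary $V$ chosen \emph{before} $h$ cannot serve the purpose: the set you actually need to displace the second conjugate away from is $h(U)$, which is only determined after $h$ is.

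The paper's resolution uses two successive displacements, and this is precisely where both the strong $\U$-moving hypothesis and the $G$-invariance of $\U$ are consumed. First take $h\in G$ with $h(U)\cap U=\emptyset$, so $[hfh^{-1},g]=\id$. Then, since $h(U)\in\U$ by $G$-invariance, apply strong $\U$-moving to the pair $(U,h(U))$ to obtain $k\in G$ with $k(U)\cap(U\cup h(U))=\emptyset$, so that $[f,kgk^{-1}]=\id$ and $[hfh^{-1},kgk^{-1}]=\id$. Writing $[f,h]=f\cdot(hf^{-1}h^{-1})$ and $[g,k]=g\cdot(kg^{-1}k^{-1})$, the four factors are supported in $U$, $h(U)$, $U$, $k(U)$ respectively, with $U$, $h(U)$, $k(U)$ pairwise disjoint; a direct computation then yields $[f,g]=[[f,h],[g,k]]$, a single commutator of elements of $[G,G]$. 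With that identity supplied, your counting does give $\cld_{[G,G]}\leq d\rr^2$ and the theorem follows.
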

\begin{proof}
 Let $\U$ and $\V$ satisfy the
assumption.  We denote
$$G^{\U}=\prod\limits_{U\in \U}[G_U,G_U].$$
As in the proof of 1.3, first we show, due to (3.1) and
$G$-invariance of $\U$, that $[G,G]\s G^{\U}$ and that any
$[f,g]\in[G,G]$ can be written as a product of at most $\rr^2$
elements of $G^{\U}$ of the form $[h_1,h_2]$, where $h_1,h_2\in
G_U$ for some $U$. This implies that every element of $[G,G]$ is a
product of at most $d\rr^2$  elements of $G^{\U}$ of the form
$[h_1,h_2]$, where $h_1,h_2\in G_U$ for some $U$.

For $U\in \U$ we will show that for every $f,g\in G_U$ the bracket
$[f,g]$ is a commutator of  two elements of $[G,G]$.
%In view of Proposition ?? the group $[G,G]$ is strongly $\V$-moving.
By assumption and Def. 3.2(4), there is $h\in G$ such that
$h(U)\cap U=\emptyset$. It follows that $[hfh^{-1}, g]=\id$. Next,
for $U, h(U)\in\U$ there is $k\in G$ such that $k(U)\cap(U\cup
h(U))=\emptyset$. Consequently,  $[f, kgk^{-1}]=\id$ and
$[hfh^{-1}, kgk^{-1}]=\id$. Therefore, in view of (3.1),
$[f,g]=[[f,h],[g,k]]$, that is $[f,g]$ is a commutator of elements
of $[G,G]$. Thus $[G,G]$ is uniformly perfect and
$\cld_{[G,G]}\leq d\rr^2$, as required.
\end{proof}

From the proof of  Theorem 3.5 we get
\begin{cor}
If $\U$ is a $G$-invariant open cover of $X$ such that  $G$ is
strongly $\U$-moving  and  $\V$-factorizable for some open cover
$\V$ satisfying $\V\prec\U$   then $[G,G]$ is perfect.
\end{cor}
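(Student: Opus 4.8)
The plan is to reuse verbatim the structure of the proof of Theorem 3.5, observing that its two boundedness hypotheses---$\cl_G$ bounded on $[G,G]$ and $G$ bounded with respect to $\frag^{\V}$---enter only through the numerical estimate $\cld_{[G,G]}\le d\rr^2$. For the bare assertion that $[G,G]$ is perfect, \emph{finiteness} of the relevant lengths already suffices, and such finiteness is automatic: $\V$-factorizability expresses each element as a finite (if unbounded) product of $\V$-supported pieces, and by definition each element of $[G,G]$ is a finite product of commutators.

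First I would carry out the fragmentation step. Every element of $[G,G]$ is a finite product of commutators $[f,g]$ with $f,g\in G$. Using $\V$-factorizability I write $f$ and $g$ as finite products of elements supported in members of $\V$ and expand $[f,g]$ by repeated use of the identities (3.1). Factors with disjoint supports commute and drop out, so only overlapping pairs survive; by Lemma 3.3 the union of two overlapping $\V$-sets lies in a common $U\in\U$, while the conjugations produced by (3.1) are absorbed into $\U$ by its $G$-invariance. Hence each $[f,g]$, and therefore every element of $[G,G]$, is a finite product of elementary commutators $[h_1,h_2]$ with $h_1,h_2\in G_U$ for some $U\in\U$. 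The sole departure from Theorem 3.5 is that the number of factors is now merely finite rather than bounded by $d\rr^2$, which is immaterial for perfectness.

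Next comes the local step, which is exactly as in Theorem 3.5 and uses no boundedness whatsoever. Fix $U\in\U$ and $f,g\in G_U$. Strong $\U$-moving together with $G$-invariance of $\U$ supplies $h\in G$ with $h(U)\cap U=\emptyset$, and then (applying Def. 3.1(4) to $U,h(U)\in\U$) an element $k\in G$ with $k(U)\cap(U\cup h(U))=\emptyset$; consequently $[hfh^{-1},g]=[f,kgk^{-1}]=[hfh^{-1},kgk^{-1}]=\id$, and (3.1) yields $[f,g]=[[f,h],[g,k]]$, a commutator of two elements of $[G,G]$. Combined with the previous paragraph, every element of $[G,G]$ is a finite product of commutators of elements of $[G,G]$, so $[G,G]\subseteq[[G,G],[G,G]]$; the reverse inclusion being trivial, $[G,G]$ is perfect.

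The only delicate point is the bookkeeping in the fragmentation step, namely checking that after the expansion (3.1) each surviving elementary commutator has both entries supported in a single $U\in\U$. This is precisely the mechanism already set up in the first parts of the proofs of Theorems 1.3 and 3.5---disjoint supports give trivial brackets, overlapping $\V$-sets share a common $\U$-set by $\V\prec\U$, and conjugated supports remain in $\U$ by $G$-invariance---so rather than repeat it I would simply invoke it. The genuinely new content of the corollary is only the remark that discarding the two boundedness assumptions of Theorem 3.5 costs the uniform bound but not perfectness itself.
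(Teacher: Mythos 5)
Your proposal is correct and is essentially the paper's own argument: the paper derives this corollary directly from the proof of Theorem 3.5, observing exactly as you do that the boundedness hypotheses on $\cl_G$ and $\frag^{\V}$ are needed only for the quantitative estimate $\cld_{[G,G]}\leq d\rr^2$, while the fragmentation step via (3.1) and the identity $[f,g]=[[f,h],[g,k]]$ obtained from strong $\U$-moving already give perfectness with merely finite (unbounded) factorizations.
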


\begin{prop}
(1) Let  $G$ be $\U$-moving. Assume that $\V$ is a $G$-invariant
open cover
 such that $\V\prec\U$, $G$ is $\V$-factorizable and $\fragd^{\V}_G=\rr$.
Then $G$ is $\rr$-$\V$-moving.

%(2) If $G$ is $\V$-factorizable and $\V\prec\U$ then $[G,G]$ is
%$\U$-factorizable.

(2) Let $\U$, $\V$, $\W$ and $\T$ be such that
$\T\prec\W\prec\V\prec\U$, and  $\V$, $\W$ and $\T$ are
$G$-invariant. If $G$ is $\U$-moving and $\T$-factorizable with
$\fragd^{\T}_G=\rr$, then $[G,G]$ is $\rr^2$-$\W$-moving.
\end{prop}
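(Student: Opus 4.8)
The plan is to prove both parts by directly unwinding the definitions of the moving and factorizability properties, using the fragmentation bound to control how many commutators are needed and the star-refinement condition $\prec$ to guarantee that supports can be combined inside a single cover element. For part (1), I would start with an arbitrary $V\in\V$ and try to build $r$ pairs of commutators of elements of $G$ whose composite displaces $V$ off itself. Since $\V\prec\U$, there is $U\in\U$ with $\sta^{\V}(V)\subset U$; because $G$ is $\U$-moving, pick $g\in G$ with $g(U)\cap U=\emptyset$, hence $g(V)\cap V=\emptyset$. The obstacle is that $g$ is just a group element, not a product of commutators, so I cannot directly use it in the definition of $r$-$\V$-moving (Def. 3.1(3)), which requires the displacement to be realized by $([f_r,g_r]\ldots[f_1,g_1])$. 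The trick will be to factor $g$ (or rather a conjugate/relevant element) using $\V$-factorizability: write the relevant element as a product of at most $\rr$ factors each supported in some $\V$-member, and then use the commutator identities (3.1) together with the $G$-invariance of $\V$ to re-express each factor as a commutator supported in a single cover element. This is exactly the mechanism already used in the proof of Theorem 1.3 and Theorem 3.5, so I would reuse that bookkeeping: each fragmentation factor contributes one commutator, giving the bound $\rr$.

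The key subtlety I would watch is the interplay between $g(V)\cap V=\emptyset$ and the fragmentation of $g$: I need the product of commutators to displace $V$, not merely to agree with $g$ on $V$. Here $G$-invariance of $\V$ is essential, since it lets me conjugate supports by $g$ and keep them inside $\V$, so that the disjointness $g(V)\cap V=\emptyset$ propagates through the factorized form. I would phrase the conclusion as: the composite $([f_\rr,g_\rr]\ldots[f_1,g_1])$ moves $V$ entirely off itself, giving $\rr$-$\V$-moving.

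For part (2), the strategy is to compose the mechanism of part (1) with the passage from $G$ to $[G,G]$, accounting for the extra factor of $\rr$. Given $W\in\W$, use $\W\prec\V$ to find $V\in\V$ with $\sta^{\W}(W)\subset V$, then apply part (1) (with the roles of the covers shifted one step down the chain $\T\prec\W\prec\V\prec\U$) to obtain a product of $\rr$ commutators of elements of $G$ displacing $V$, hence $W$. To land inside $[G,G]$ rather than $G$, I would then re-fragment each of these commutators using $\T$-factorizability: each commutator of $G$-elements, when fragmented over $\T$ and recombined via (3.1), becomes a product of at most $\rr$ commutators of elements supported in single $\T$-members, which are elements of $[G,G]$. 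Composing the two layers of fragmentation multiplies the counts, yielding $\rr\cdot\rr=\rr^2$ commutators of elements of $[G,G]$ that displace $W$, i.e. $[G,G]$ is $\rr^2$-$\W$-moving. The main obstacle throughout is purely combinatorial—keeping track of which cover ($\U$, $\V$, $\W$, or $\T$) each support lives in at each stage and verifying that the two applications of the commutator formulae (3.1) compose to give exactly $\rr^2$ and not more—rather than conceptual, since all the moving parts (fragmentation bounds, $G$-invariance, star-refinement, and the identities (3.1)) have already been assembled in the proofs of Theorems 1.3 and 3.5.
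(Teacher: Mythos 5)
Your plan for part (1) starts correctly: use $\V\prec\U$ and the $\U$-moving hypothesis to find $g\in G$ with $g(V)\cap V=\emptyset$, and fragment $g=g_\rr\cdots g_1$ with $\supp(g_i)\s V_i\in\V$. But the step where you ``use the commutator identities (3.1) \dots to re-express each factor as a commutator supported in a single cover element'' does not work and is not what the paper does. The identities (3.1) decompose a commutator of products into a product of (conjugated) commutators of the factors; they cannot convert an arbitrary fragmentation factor $g_i\in G_{V_i}$ --- which need not lie in $[G,G]$ at all --- into a commutator. Moreover, your stated worry is inverted: producing a product of commutators that merely \emph{agrees with $g$ on $V$} is exactly sufficient, since such a product then sends $V$ onto $g(V)$, which is disjoint from $V$. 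The paper's actual mechanism is the displacement trick: for each $i$ one chooses an auxiliary $f_i\in G$ (via $\U$-moving) displacing a member of $\U$ that contains both $\supp(g_i)$ and the current image $(g_{i-1}\cdots g_1)(V)$ --- this is precisely where $\V\prec\U$ enters, to fit two intersecting $\V$-members into one $\U$-member --- so that $[g_i,f_i]$ agrees with $g_i$ on that image. An induction, with a case analysis according to whether the image has already been displaced off itself, then gives $([g_\rr,f_\rr]\cdots[g_1,f_1])(V)=g(V)$, so $\rr$ commutators suffice. Your proposal never introduces these auxiliary displacing elements, and without them there is no way to manufacture the required commutators.

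Part (2) has a related problem. Re-fragmenting each of the $\rr$ commutators from part (1) over $\T$ and recombining via (3.1) produces, per commutator, $\rr^2$ (not $\rr$) factors of the form $[h_1,h_2]$ with $h_1,h_2\in G_W$; that gives $\rr^3$ commutators in total, and they are commutators of elements of $G$, not of $[G,G]$, whereas Definition 3.1(3) for ``$[G,G]$ is $\rr^2$-$\W$-moving'' requires commutators of elements of $[G,G]$. The paper instead applies part (1) twice: the first application shows $G$ is $\rr$-$\V$-moving, hence $[G,G]$ is $\V$-moving; the bookkeeping from the proof of Theorem 1.3 shows $[G,G]$ is $\W$-factorizable with $\fragd^{\W}_{[G,G]}\leq\rr^2$, the factors being commutators $[h_1,h_2]$ with $h_1,h_2\in G_W$ and hence elements of $[G,G]$; and a second application of (1), now to the group $[G,G]$ and the pair $\W\prec\V$, yields $\rr^2$ commutators of elements of $[G,G]$. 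You should restructure part (2) along these lines rather than composing two layers of (3.1).
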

\begin{proof} (1) Suppose that $\V\prec\U$ and
let  $V\in\V$. Then there is $g\in G$ such that $g(V)\cap
V=\emptyset$. By assumption there exist $V_1\ld V_{\rr}\in\V$ and
$g_1\ld g_{\rr}\in G$ such that $g=g_r\ldots g_1$ and
$\supp(g_i)\s V_i$, $i=1\ld \rr$ (possibly $g_i=\id$).

Let us consider two cases: $(a)$ $g_1(V)\cap V=\emptyset$ and
$(b)$ $g_1(V)\cap V\neq\emptyset$. In case $(a)$ we have $
g_1(V)\cup V\s\supp(g_1)\s U\in\U$. Choose $f_1\in G$ such that
$f_1(U)\cap U=\emptyset$. Then $[g_1,f_1](V)=g_1(V)$ and we are
done. In case $(b)$ $V\cup g_1(V)\s U_1\in\U$ such that
$f_1(U_1)\cap U_1=\emptyset$ for some $f_1\in G$. Again
$[g_1,f_1](V)=g_1(V)$. Now we continue as before. In case
$(g_2g_1)(V)\cap g_1(V)=\emptyset$ we get $V\cap\bar
g_2(V)=\emptyset$, where $\bar g_2=g_1^{-1}g_2^{-1}g_1$, and we
are done as in $(a)$. Otherwise, $(g_2g_1)(V)\cup g_1(V)\s
U_2\in\U$ such that $f_2(U_2)\cap U_2=\emptyset$ for some $f_2\in
G$. Therefore, $[g_2,f_2](g_1(V))=(g_2g_1)(V)$. Proceeding by
induction we get
$$ ([g_{\rr},f_{\rr}]\ldots [g_1,f_1])(V)=(g_{\rr}\ldots g_1)(V)=g(V),$$
and the claim follows.

(2) It follows from the hypotheses that $G$ is $\V$-factorizable
and $\fragd^{\V}_G\leq \rr$. Moreover, as in the proof of Theorem
1.3 we get that $[G,G]$ is $\W$-factorizable and
$\fragd^{\W}_{[G,G]}\leq \rr^2$. Hence by (1) $G$ is
$\rr$-$\V$-moving. In particular $[G,G]$ is $\V$-moving. Then
again (1) implies that $[G,G]$ is $\rr^2$-$\W$-moving.
\end{proof}

In the following  version of Theorem 1.3 we avoid the assumption
that $G$  is strongly $\U$-moving.
\begin{thm}
Let $X$ be a paracompact   topological space, let $G\leq\H(X)$
with $\cl_G$ bounded, and let $\U$ be an open cover of $X$ such
that
\begin{enumerate} \item $G$ is  $\U$-moving, and \item  there are  $G$-invariant open
covers $\V$, $\W$, and $\T$ fulfilling the relation
$\T\prec\W\prec\V\prec\U$,
 and such that $G$ is $\T$-factorizable and it is
bounded with respect to $\frag^{\T}$.\end{enumerate} Then  $[G,G]$
is uniformly perfect and $\cld_{[G,G]}\leq 4d\rr^4$ provided
$\fragd^{\T}_G= \rr$ and $\cld_G=d$.
\end{thm}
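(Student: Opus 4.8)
The plan is to keep the architecture of the proof of Theorem 3.5, but to replace the strongly $\U$-moving hypothesis there (which produced each local bracket as a single commutator of elements of $[G,G]$) by the weaker moving property that the tower of covers supplies, at the cost of two extra multiplicative factors. The whole input about moving is obtained in one shot: since $G$ is $\U$-moving and $\T$-factorizable with $\fragd^{\T}_G=\rr$, and $\T\prec\W\prec\V\prec\U$ with $\V,\W,\T$ all $G$-invariant, Proposition 3.7(2) gives that $[G,G]$ is $\rr^2$-$\W$-moving. Unwinding Definition 3.1(3) for the group $[G,G]$, this means that for every $W\in\W$ there is an element $h\in[G,G]$ that is a \emph{product of $\rr^2$ commutators of elements of $[G,G]$} and satisfies $h(W)\cap W=\emptyset$. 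This is the only fact about moving that the argument will need.

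Next comes the fragmentation count, exactly as in Theorems 1.3 and 3.5. Take $\phi\in[G,G]$; since $\cl_G$ is bounded by $d$, write $\phi$ as a product of at most $d$ commutators $[a,b]$ with $a,b\in G$. Using $\T$-factorizability with $\fragd^{\T}_G=\rr$, each of $a,b$ is a product of at most $\rr$ factors supported in members of $\T$; expanding $[a,b]$ by the commutator identities (3.1) turns it into a product of at most $\rr^2$ conjugated elementary brackets $w[p,q]w^{-1}$ with $p,q$ supported in members of $\T$ and $w\in G$. Because $\T\prec\W$, such a bracket is trivial unless the two supports meet, in which case $p,q\in G_W$ for a common $W\in\W$; and because $\W$ is $G$-invariant, the conjugate $w[p,q]w^{-1}=[wpw^{-1},wqw^{-1}]$ is again a bracket of two elements of $G_{w(W)}$ with $w(W)\in\W$. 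Hence $\phi$ is a product of at most $d\rr^2$ \emph{local brackets}, that is, elements of the form $[f,g]$ with $f,g\in G_W$ for some $W\in\W$.

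The heart of the matter is the local step: each local bracket $[f,g]$ with $f,g\in G_W$ is a product of at most $4\rr^2$ commutators of elements of $[G,G]$. Choose $h\in[G,G]$ as above, so $h(W)\cap W=\emptyset$ and $h=[p_1,q_1]\cdots[p_{\rr^2},q_{\rr^2}]$ with all $p_i,q_i\in[G,G]$. Since $h(W)$ is disjoint from $W\supseteq\supp(f)\cup\supp(g)$, the conjugate $hf^{-1}h^{-1}$ is supported in $h(W)$ and therefore commutes with both $f$ and $g$; as in the proof of Theorem 1.3 this yields the exact identity $[[h,f^{-1}],g]=[f,g]$. Now count. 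Writing $[h,f^{-1}]=h\cdot(f^{-1}h^{-1}f)$, the factor $f^{-1}h^{-1}f$ is a conjugate of $h^{-1}$, and since $[G,G]$ is normal in $G$ each conjugated factor $f^{-1}[q_i,p_i]f=[f^{-1}q_if,\,f^{-1}p_if]$ is again a commutator of elements of $[G,G]$; hence $[h,f^{-1}]$ is a product of $2\rr^2$ commutators of elements of $[G,G]$. Consequently $[[h,f^{-1}],g]=[h,f^{-1}]\cdot g\,[h,f^{-1}]^{-1}g^{-1}$, whose second factor is a $g$-conjugate of $[h,f^{-1}]^{-1}$ and so again a product of $2\rr^2$ such commutators, is a product of $2\rr^2+2\rr^2=4\rr^2$ commutators of elements of $[G,G]$.

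Combining the two counts, every element of $[G,G]$ is a product of at most $d\rr^2\cdot 4\rr^2=4d\rr^4$ commutators of elements of $[G,G]$, so $[G,G]$ is uniformly perfect with $\cld_{[G,G]}\leq 4d\rr^4$, as asserted. The step I expect to require the most care is the local one: verifying the identity $[[h,f^{-1}],g]=[f,g]$ cleanly from the disjointness $h(W)\cap W=\emptyset$ (the inverse on $f$ is forced by the convention $[f,g]=fgf^{-1}g^{-1}$, as a direct computation of the two sides on $W$ and on $h(W)$ shows), together with the bookkeeping that conjugation by elements of $G$ preserves both membership in $[G,G]$ and the number of commutators. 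It is precisely here that the $G$-invariance of $\W$ and the normality of $[G,G]$ in $G$ are indispensable, and that the two factors $\rr^2$ (one from fragmentation, one from the length of the moving element $h$) and the factor $4$ enter.
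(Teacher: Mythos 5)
Your proof is correct and follows essentially the same route as the paper's: Proposition 3.7(2) to make $[G,G]$ $\rr^2$-$\W$-moving, the fragmentation count via (3.1) and $\T\prec\W$ giving at most $d\rr^2$ local brackets, and the displacement trick giving each local bracket as a product of $4\rr^2$ commutators of elements of $[G,G]$. Your only deviation is cosmetic but welcome: you use the identity $[[h,f^{-1}],g]=[f,g]$ where the paper writes $[[h,f],g]=[f,g]$ (which, with the convention $[f,g]=fgf^{-1}g^{-1}$, actually evaluates to $[f^{-1},g]$), so your bookkeeping of the inverse is the more careful version of the same argument.
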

\begin{proof}
Let $\fragd_G^{\T}=\rr$. Then a fortiori $\fragd_G^{\W}\leq \rr$.
In view of Proposition 3.7, $[G,G]$ is $\rr^2$-$\W$-moving.

Let $[f,g]\in[G,G]$. By applying for $\T\prec\W$ the same
reasoning as in the proof of Theorem 1.3 for $\V\prec\U$, $[f,g]$
can be written as a product of at most $\rr^2$ elements from
$G^{\W}=\prod_{W\in\W}[G_W,G_W]$ of the form $[h_1,h_2]$, where
$h_1,h_2\in G_W$ for some $W\in \W$. Consequently, every element
of $[G,G]$ can be expressed as a product of at most $d\rr^2$
elements of $G^{\W}$ of the form $[h_1,h_2]$, where $h_1,h_2\in
G_W$ for some $W\in\W$.

Now take arbitrarily $W\in\W$ and $f,g\in G_W$. Since $[G,G]$ is
$\rr^2$-$\W$-moving, there are $h_1\ld h_{\rr^2},h'_1\ld
h'_{\rr^2}\in [G,G]$ such that for $h=[h_1,h'_1]\ldots
[h_{\rr^2},h'_{\rr^2}]$ we have $h(W)\cap W=\emptyset$ and,
consequently, $[[h,f],g]=[f,g]$. It is easily seen that
$[[h,f],g]$ is a product of $4\rr^2$ commutators of elements of
$[G,G]$. Thus any element of $[G,G]$ is a product of at most
$4d\rr^4$ commutators of elements of $[G,G]$.

\end{proof}
As a consequence of the above proof we have
\begin{cor}
If $G$ is  $\U$-moving and  $\T$-factorizable for some
$G$-invariant open covers $\V$, $\W$, and $\T$ such that
$\T\prec\W\prec\V\prec\U$, then $[G,G]$ is  perfect.
\end{cor}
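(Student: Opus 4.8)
The plan is to rerun the proof of Theorem 3.8 verbatim and simply discard every numerical bound. The boundedness hypotheses on $\cl_G$ and on $\frag^{\T}$ were used there only to convert the (always finite) per-element commutator counts into the uniform estimate $\cld_{[G,G]}\leq 4d\rr^4$; dropping them leaves intact the weaker conclusion that each element of $[G,G]$ is a \emph{finite} product of commutators of elements of $[G,G]$, which is exactly perfectness. Since the hypotheses here are precisely those of Theorem 3.8 minus the two boundedness assumptions, I would check that each step of that proof survives elementwise.

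First I would record the factorization step. Because $G$ is $\T$-factorizable and $\T\prec\W$, the commutator formulae (3.1) together with the $G$-invariance of $\W$ show, exactly as in the proof of Theorem 1.3 applied to $\T\prec\W$, that every $[f,g]\in[G,G]$ is a finite product of local brackets $[h_1,h_2]$ with $h_1,h_2\in G_W$ for various $W\in\W$; hence so is every element of $[G,G]$. Here one only uses that $\frag^{\W}(g)\leq\frag^{\T}(g)<\infty$ for each individual $g$, so no uniform diameter is needed.

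Next I would produce, for each $W\in\W$, a single $h\in[G,G]$ with $h(W)\cap W=\emptyset$ that is moreover an explicit finite product of commutators of elements of $[G,G]$. This is Proposition 3.7(2) read elementwise: starting from $\U$-moving and running the inductive construction of Proposition 3.7(1) along the tower $\T\prec\W\prec\V\prec\U$, a given $W$ is carried off itself by a product $h=[h_1,h'_1]\cdots[h_m,h'_m]$ with all $h_j,h'_j\in[G,G]$, the finite length $m$ now being allowed to depend on $W$. The only place the finiteness of $\fragd^{\T}_G$ entered before was to render $m$ uniform; for perfectness a $W$-dependent finite $m$ suffices.

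Finally I would combine these ingredients by the one-element commutator trick. Fix $W$ and $f,g\in G_W$, and take $h=\prod_{j=1}^{m}[h_j,h'_j]\in[G,G]$ as above with $h(W)\cap W=\emptyset$. Since $hfh^{-1}$ is supported in $h(W)$ it commutes with $g$, whence $[[h,f],g]=[f,g]$ as in Theorems 1.3 and 3.8. Expanding $[h,f]$ by $[ab,f]={}^{a}[b,f]\,[a,f]$ yields a product of $m$ conjugates of the brackets $[[h_j,h'_j],f]=[h_j,h'_j]\cdot[{}^{f}h'_j,{}^{f}h_j]$, each of which is a product of two commutators of elements of $[G,G]$ (normality of $[G,G]$ keeps ${}^{f}h_j,{}^{f}h'_j\in[G,G]$); thus $[h,f]$, and likewise its $g$-conjugate, is a product of $2m$ such commutators, so $[[h,f],g]$ is a product of $4m$ commutators of elements of $[G,G]$. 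Hence every local $[f,g]$ lies in $[[G,G],[G,G]]$, and with the factorization step so does every element of $[G,G]$, proving $[G,G]$ perfect. The main obstacle is the middle step: one must verify that the $W$-moving element can be taken inside $[G,G]$ and, crucially, presented as an \emph{honest product of commutators} of $[G,G]$-elements rather than merely as an element of $[G,G]$, for this is what feeds the expansion $[[h_j,h'_j],f]=[h_j,h'_j]\cdot[{}^{f}h'_j,{}^{f}h_j]$ and makes the final count land in $[[G,G],[G,G]]$ — it is precisely Proposition 3.7(2) with its uniform bound removed.
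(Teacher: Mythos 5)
Your proposal is correct and is essentially the paper's own argument: the corollary is stated there precisely as "a consequence of the above proof," i.e.\ the proof of Theorem 3.8 with the two boundedness hypotheses (and hence the bound $4d\rr^4$) discarded, each element-wise count remaining finite exactly as you describe. Your elaboration of the middle step (that the $\W$-moving element of $[G,G]$ supplied by Proposition 3.7(2) comes as an explicit finite product of commutators of $[G,G]$-elements, which is what makes the final expansion land in $[[G,G],[G,G]]$) is the right point to check, and it goes through.
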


\section{Simplicity and uniform simplicity of $[G,G]$}

Let us recall Epstein's theorem.
\begin{thm}\cite{ep}\label{eps}
Let $X$ be a paracompact space, let $G$ be a group
of~homeomorphisms of $X$ and let $\,\mathcal{B}$ be a basis of
open
sets of $X$ satisfying the following axioms:\\
\noindent{Axiom 1.} If $U\in \mathcal{B}$ and $g\in G$,
then $g(U)\in \mathcal{B}$.\\
\noindent{Axiom 2.} $G$ acts transitively on
$\mathcal{B}$ (i.e. $\forall\, U,V \in \B \; \exists\, g \in G : g(U)=V$).\\
\noindent{Axiom 3.} Let $g\in G$, $U\in \mathcal{B}$ and let
$\mathcal{U}\subset \mathcal{B}$ be a cover of $X$. Then there
exist an integer $n$, elements $g_1,\dots ,g_n\in G$ and
$V_1,\dots ,V_n\in \mathcal{U}$ such that $g=g_ng_{n-1}\dots g_1$,
$\supp (g_i)\subset V_i$ and
$$\supp (g_i)\cup (g_{i-1}\dots g_1(\overline{U}))\neq X\; \text{for}\; 1\leqslant i\leqslant n.$$
Then $[G,G]$, the commutator subgroup of $G$, is simple.
\end{thm}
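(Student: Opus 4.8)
The plan is to prove simplicity in the usual way: I take a normal subgroup $N\lhd[G,G]$ with $N\neq\{e\}$ and show that $N=[G,G]$. Since $N$ is nontrivial I fix $h\in N$, $h\neq e$, and a point $p$ with $h(p)\neq p$. Using that $\B$ is a basis, together with a separation of $p$ from $h(p)$, I choose $U\in\B$ with $p\in U$ and $h(\overline U)\cap\overline U=\emptyset$; in particular $h(U)\cap U=\emptyset$. Axiom~1 guarantees that all the translates $g(U)$ I will need stay inside the basis $\B$, and Axiom~2 will be the source of the transitivity used to spread local information around $X$.

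The heart of the argument is a local injection of commutators into $N$, and the key observation is that an element of the form $[a,b]$ automatically lies in $[G,G]$, which normalizes $N$, whether or not $a,b$ themselves lie in $[G,G]$. So for arbitrary $a,b\in G$ with $\supp(a),\supp(b)\s U$ I consider
\[
[[a,b],h]=[a,b]\,h[a,b]^{-1}h^{-1}.
\]
Since $[a,b]\in[G,G]$ and $N\lhd[G,G]$, the conjugate $[a,b]\,h[a,b]^{-1}$ lies in $N$, hence $[[a,b],h]\in N$. Now $[a,b]$ is supported in $U$ while $h[a,b]^{-1}h^{-1}$ is supported in $h(U)$, and these sets are disjoint; bracketing once more with any $c\in G_U$ and using that disjointly supported homeomorphisms commute, I obtain $[[[a,b],h],c]=[[a,b],c]\in N$, an element now supported in $U$ alone. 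All of these manipulations are just repeated applications of the commutator identities (3.1). This shows that $N$ contains the genuinely $U$-localized subgroup generated by the brackets $[[a,b],c]$, and in particular nontrivial elements supported in $U$.

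It remains to globalize: I must pass from elements of $N$ supported in one basis set $U$ to all of $[G,G]$. Transitivity (Axiom~2) together with Axiom~1 moves $U$ onto an arbitrary $V\in\B$ by some $g\in G$, and conjugation by $g$ carries $U$-localized commutators to $V$-localized ones. I would resolve this by bootstrapping the displacement: elements of $N$ already produced and supported in $U$ themselves displace smaller basis sets, so the trick of the previous paragraph can be rerun at those sets with a displacer drawn from $N$ itself, and Axiom~3 then performs the global work. Concretely, given any commutator $[f,g]$ with $f,g\in G$, Axiom~3 fragments $f$ (and likewise $g$) as $f=f_n\cdots f_1$ with each $\supp(f_i)$ in a basis set and with the growth condition $\supp(f_i)\cup\bigl(f_{i-1}\cdots f_1(\overline U)\bigr)\neq X$; this free-point condition is exactly what lets me, at each stage, displace the reference set $\overline U$ off the accumulated support and thereby realize each fragment's contribution through a displacer lying in $N$. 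Expanding $[f,g]$ via (3.1) into such locally supported commutators then places $[f,g]$ in $N$, and since such commutators generate $[G,G]$ I conclude $N=[G,G]$.

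The main obstacle is precisely this last, global step. The local injection into $N$ is clean because commutators are gratis members of $[G,G]$, which normalizes $N$; the genuine difficulty is reconciling the transitivity supplied through all of $G$ with the fact that $N$ is a priori known to be normal only in $[G,G]$, so that conjugation by the transporting $g\in G$ is not obviously legitimate. Axiom~3, and in particular its support-growth clause, is the device engineered to bypass this, and verifying that the fragmentation it provides can always be matched with a displacer taken from $N$ is the step I expect to require the most care.
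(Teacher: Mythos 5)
This theorem is quoted from Epstein's paper \cite{ep}; the present paper states it without proof, so there is no internal argument to compare against, and your attempt has to be measured against Epstein's original proof, which is a genuinely delicate multi-page induction. Your proposal has two real gaps. The first is already in the local step: having produced $n=[[a,b],h]\in N$ (which is correct, since $[a,b]\in[G,G]$ normalizes $N$ and $h\in N$), you then bracket with ``any $c\in G_U$'' and assert $[n,c]=[[a,b],c]\in N$. The identity $[n,c]=[[a,b],c]$ is fine (disjoint supports), but $[n,c]=n\cdot(cn^{-1}c^{-1})$ lies in $N$ only if $c$ normalizes $N$ --- exactly the obstruction you yourself flag for the global step. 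Since $N$ is normal only in $[G,G]$ and $c\in G_U$ is arbitrary, this is unjustified; the natural fix (taking $c$ itself to be a commutator of elements of $G_U$) only places the double commutators $[[a,b],[c,d]]$ with $a,b,c,d\in G_U$ into $N$, a strictly weaker local seed whose nontriviality and generating power you would then still have to establish.

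The second and more serious gap is the globalization. You correctly identify that conjugating by the transporting element $g\in G$ supplied by Axiom~2 is illegitimate because $gNg^{-1}$ need not equal $N$, and that the free-point clause of Axiom~3, namely $\supp(g_i)\cup(g_{i-1}\cdots g_1(\overline{U}))\neq X$, is the device meant to circumvent this. But the proposal stops at announcing that a ``bootstrapping'' will work and that this is ``the step I expect to require the most care.'' That step is the entire content of Epstein's theorem: one must run a careful induction along the fragmentation $g=g_n\cdots g_1$, at each stage using the free point to manufacture, inside $N$ itself, a displacer of the relevant basis set, without ever conjugating $N$ by anything outside $[G,G]$. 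As written, what you have is the standard displacement lemma (showing that a nontrivial subgroup normalized by $[G,G]$ contains certain localized commutators) plus an accurate description of the remaining difficulty --- not a proof that $[G,G]$ is simple.
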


 It is worth noting that Theorem 4.1 was an indispensable
ingredient in the proofs of celebrated simplicity theorems on
diffeomorphism groups and their generalizations (c.f. \cite{Thu},
\cite{Mat}, \cite{ban1}, \cite{ban2}, \cite{ha-ry}, \cite{ry1}).

We say that $G\leq\H(X)$ acts \emph{transitively inclusively}
(c.f. \cite{li}) on a topological basis $\B$ if for all $U,V\in\B$
there is $g\in G$ such that $g(U)\s V$. It is not difficult to
derive from Theorem 1.2 the following amelioration of Theorem 4.1,
see \cite{li}.

\begin{thm}\cite{li}
Let $X$ be a paracompact space, let $G\leq \H(X)$  and let
$\,\mathcal{B}$ be a basis of open
sets of $X$ satisfying the following axioms:\\
\noindent{Axiom 1.} $G$ acts transitively
inclusively  on  $\B$.\\
\noindent{Axiom 2.} $G$ is $\U$-factorizable (Def. 1.1) for all
covers $\U\s\B$.\\
 Then $[G,G]$
is a simple group.
\end{thm}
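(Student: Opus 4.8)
The plan is to deduce Theorem 4.2 from Ling's theorem (Theorem 1.2) together with the inclusive transitivity encoded in Axiom 1. First I would verify that the hypotheses of Theorem 1.2 are met: since $G$ acts transitively inclusively on the basis $\B$, for any $x\in X$ I can pick $U,V\in\B$ with $U\ni x$ and $g\in G$ with $g(U)\s V$; by choosing $V$ to avoid $x$ (using that $\B$ is a basis separating points) one gets $g(x)\neq x$, so $G$ is non-fixing. Factorizability in the sense of Def.\ 1.1 follows from Axiom 2 once one observes that any open cover of $X$ can be refined by a cover contained in $\B$ (as $\B$ is a basis). Hence Theorem 1.2 applies and $[G,G]$ is perfect; in particular $[[G,G],[G,G]]=[G,G]$, which will be the engine for simplicity.

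Next I would establish the standard commutator-support fact that makes simplicity arguments work: if $f,g\in G$ have supports contained in sets $U,W\in\B$ with $W\cap U=\emptyset$ after a suitable conjugation, then the relevant brackets vanish, exactly as exploited in the proofs of Theorems 1.3 and 3.5 via the identities (3.1). The key step is to show that a nontrivial normal subgroup $N\lhd [G,G]$ must contain every $[f,g]$ with $f,g\in G_U$ for each basic $U\in\B$. So I would take $N\neq\{e\}$ normal in $[G,G]$, pick $n\in N$ with $n\neq e$, and locate a basic set $U$ moved off itself by $n$, i.e.\ $n(U)\cap U=\emptyset$; inclusive transitivity lets me shrink and translate $U$ freely to arrange this, since I can find $g\in G$ carrying a small basic neighbourhood of a moved point inside a disjoint basic set.

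The main obstacle, and the heart of the argument, is the \emph{displacement-and-absorption} step: using $n(U)\cap U=\emptyset$ to show that $G_U\s N$ (or at least $[G_U,G_U]\s N$). Here I would argue that for $h\in G_U$ the conjugate $nhn^{-1}$ is supported in $n(U)$, which is disjoint from $U$, so $[n,h]=nhn^{-1}\cdot h^{-1}\in N$ lies in $G_U\cdot G_{n(U)}$ and in fact realizes $h^{-1}$ up to a factor supported away from $U$; combining this with the perfectness of $[G,G]$ and the commutator formulae (3.1) shows that the local brackets $[f,g]$, $f,g\in G_U$, all lie in $N$. Transitive inclusivity then propagates this from one basic set to every basic set, because any $[f,g]$ with supports in some $U'\in\B$ can be conjugated by a $g\in G$ sending $U'$ inside $U$.

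Finally I would assemble global simplicity from the local statement. Given an arbitrary $w\in [G,G]$, factorizability (Axiom 2, applied to a cover refining $\{U\in\B\}$) writes $w$ as a product of elements each supported in a basic set, and perfectness of $[G,G]$ rewrites each such factor as a product of local commutators $[f,g]$ with $f,g\in G_U$. Since every such local commutator lies in $N$ by the previous step, it follows that $w\in N$, whence $N=[G,G]$. As $N$ was an arbitrary nontrivial normal subgroup, $[G,G]$ is simple. The delicate points to watch are ensuring the point-separation needed for non-fixing, and checking that the absorption argument genuinely captures \emph{all} local commutators rather than a proper subgroup of $G_U$; I expect the latter to require one careful application of the identities in (3.1) exactly as in the proof of Theorem 3.5.
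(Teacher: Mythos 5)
The paper offers no proof of Theorem 4.2 --- it is quoted from Ling's article with the remark that it can be derived from Theorem 1.2 --- so your proposal must be judged on its own. Your first stage is correct: Axiom 1 gives the non-fixing property (choose $V\in\B$ with $x\notin V$ and $g$ with $g(U)\s V$), Axiom 2 together with the fact that $\B$ refines every open cover gives factorizability, so Theorem 1.2 yields perfectness of $[G,G]$; and the decomposition of any $w\in[G,G]$ into conjugates of commutators $[h_1,h_2]$ with $h_1,h_2\in G_U$, $U\in\U^G$, via the identities (3.1) is exactly the mechanism used in the proof of Theorem 1.3.

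The gap lies in the displacement--absorption step and in the propagation step, and it is the same gap in both: $N$ is normal only in $[G,G]$, while the elements you conjugate by are drawn from $G$. Concretely, for $h\in G_U$ you assert $[n,h]=n\cdot(hn^{-1}h^{-1})\in N$; this needs $hn^{-1}h^{-1}\in N$, i.e.\ $h$ must normalize $N$, which is not given since $h\in G_U$ need not lie in $[G,G]$. Likewise the element $k\in G$ with $k(U')\s U$ supplied by Axiom 1 need not normalize $N$, so from $k[f,g]k^{-1}\in N$ you cannot conclude $[f,g]\in N$. This is not a formality: what your steps actually establish is that the normal closure in $G$ of any nontrivial element of $[G,G]$ contains $[G,G]$, together with perfectness of $[G,G]$ --- and these two facts do \emph{not} imply simplicity of $[G,G]$. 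For instance, $G=(A_5\times A_5)\rtimes C_2$ with $C_2$ swapping the factors has $[G,G]=A_5\times A_5$ perfect, and every nontrivial normal subgroup of $G$ contains $[G,G]$, yet $[G,G]$ is not simple. The real content of Ling's (and Epstein's) argument is to run the displacement and all conjugations \emph{inside} $[G,G]$: one uses that $[G,G]$ is itself locally moving and $1$-non-fixing (Proposition 3.2), that the displacing element can be taken to be a commutator $[h_1,h_2]\in[G,G]$ (as in the proofs of Theorems 1.3 and 3.5), and that enough of the transitive-inclusive action and of the fragmentation is inherited by $[G,G]$ so that the conjugators can be chosen there. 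That is the missing ingredient; the rest of your outline is the right skeleton. (A cosmetic point: with the paper's convention $[[n,f],g]$ computes to $[f^{-1},g]$ rather than $[f,g]$, which is harmless since $f$ ranges over a group.)
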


Now we wish  to provide conditions ensuring that the commutator
group of a homeomorphism group is uniformly simple. Recall that a
group $G$ is called \emph{uniformly simple} if there is $d>0$ such
that for all $f,g\in G$ with $f\neq e$ we have
$g=h_1fh_1^{-1}\ldots h_sfh_s^{-1}$, where $s\leq d$ and $h_1\ld
h_s\in G$. Given a uniformly simple group $G$, denote by $\usd_G$
the least $d$ as above.

Note that recently Tsuboi \cite{Tsu3} showed that
$\D^{\infty}_c(M)$ is uniformly simple for many types of manifolds
$M$. However, for some types of $M$ the problem is still unsolved.

\begin{thm} Let $\B$ be a topological basis of $X$. Suppose that $G\leq\H(X)$
satisfies the following conditions:\begin{enumerate} \item $\cl_G$
is bounded; \item $G$ acts transitively inclusively on $\B$; \item
there is an open cover $\U\prec\B$ such that $G$ is
$\U$-factorizable and $G$ is bounded w.r.t. the fragmentation norm
$\frag_G^{\U}$.\end{enumerate} Then the group $[G,G]$ is uniformly
simple. Moreover, if $\cld_G=d$ and $\fragd_G^{\U}=\rr$ then
$\usd_G\leq 4d\rr^2$.
\end{thm}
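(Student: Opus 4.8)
The plan is to follow the two-stage scheme of the proofs of Theorems 1.3 and 3.5, but to replace the $G$-invariance of the cover by the transitive inclusive action and, in the second stage, to realise each local commutator through conjugates of the single fixed element $f$ rather than through arbitrary commutators. Fix $f\in[G,G]$ with $f\neq\id$ and an arbitrary $g\in[G,G]$. The first stage reduces $g$ to local commutators exactly as in the proof of Theorem 1.3: writing $g$ as a product of at most $d=\cld_G$ commutators $[a,b]$ and fragmenting each factor into at most $\rr=\fragd_G^{\U}$ pieces supported in members of $\U$, the formulae (3.1) express each $[a,b]$ as a product of at most $\rr^2$ terms ${}^{c}[a_k,b_l]=[{}^{c}a_k,{}^{c}b_l]$ with $c\in G$. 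A term is nontrivial only when $\supp(a_k)\cap\supp(b_l)\neq\emptyset$, and then $\supp(a_k)\cup\supp(b_l)$ is contained in a single $B\in\B$ because $\U\prec\B$; consequently both factors ${}^{c}a_k,{}^{c}b_l$ are supported in $S:=c(B)$. Thus $g$ is a product of at most $d\rr^2$ commutators $[\alpha,\beta]$ with $\alpha,\beta\in G_S$ for sets $S$ of this form.

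In the second stage I would displace each such $S$ by a conjugate of $f$. Since $f\neq\id$, it moves some point, so there is $B_0\in\B$ with $f(B_0)\cap B_0=\emptyset$. Given $[\alpha,\beta]$ with $\alpha,\beta\in G_S$, $S=c(B)$, the transitive inclusive action yields $g_0\in G$ with $g_0(B)\s B_0$; setting $w=cg_0^{-1}$ gives $S=c(B)\s w(B_0)$, and hence $h:=wfw^{-1}$ satisfies $h(S)\cap S=\emptyset$, since $h(w(B_0))=wf(B_0)$ is disjoint from $w(B_0)\supseteq S$. Note $h\in[G,G]$ by normality of $[G,G]$ in $G$, and $h$ is a conjugate of $f$. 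As $\supp(\alpha),\supp(\beta)\s S$ and $h(S)\cap S=\emptyset$ we get $[h\alpha h^{-1},\beta]=\id$, whence $[[h,\alpha],\beta]=[\alpha,\beta]$ exactly as in the proof of Theorem 1.3. Expanding the left-hand side gives $[[h,\alpha],\beta]=h\,({}^{\alpha}h)^{-1}\,({}^{\beta\alpha}h)\,({}^{\beta}h)^{-1}$, a product of four conjugates of $h^{\pm1}$, hence of four conjugates of $f^{\pm1}$.

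Putting the stages together, every $g\in[G,G]$ is a product of at most $4d\rr^2$ conjugates of $f^{\pm1}$, which gives the uniform simplicity of $[G,G]$ together with the estimate $\usd_G\leq4d\rr^2$. I expect the main obstacle to be the bookkeeping in the first stage: after the repeated application of (3.1) one must guarantee that each surviving local commutator has both of its factors supported in one commonly displaceable open set. This is exactly where $\U\prec\B$ (which collects intersecting $\U$-fragments into a single basis set) and the transitive inclusive action (which covers the conjugated set $S=c(B)$ by some $w(B_0)$, thereby effecting the displacement by a genuine conjugate of $f$ rather than by an uncontrolled element) take over the role played by the $G$-invariance of the cover in Section 3. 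The remaining points are routine: the conjugating elements $w,\alpha w,\beta\alpha w,\beta w$ lie in $G$ while the conjugates themselves remain in $[G,G]$ by normality, and both $f$ and $f^{-1}$ occur among them.
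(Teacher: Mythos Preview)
Your proof is correct and follows essentially the same approach as the paper's: both reduce $g$ via (3.1) and $\U\prec\B$ to at most $d\rr^2$ local commutators supported in sets of the form $c(B)$ with $B\in\B$, and then use the transitive inclusive action together with the displacing identity $[[h,\alpha],\beta]=[\alpha,\beta]$ to express each such commutator as four conjugates of $f^{\pm1}$. The only cosmetic difference is that the paper conjugates the local commutator into $G_{B_0}$ and applies $f$ there, whereas you conjugate $f$ out to displace $S$ --- these are equivalent formulations yielding the same bound $4d\rr^2$.
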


\begin{proof}
 In view of Theorem 4.2, $[G,G]$ is
simple. Let $f,g\in [G,G]$ such that $f\neq e$. There is $x\in X$
with $f(x)\neq x$ and $B\in\B$ satisfying $f(B)\cap B=\emptyset$.

 First we assume that  $g=[g_1,g_2]\in[G,G]$. Then, if $\fragd_G^{\U}=\rr$ then $g$
 can be expressed
as a product of at most $\rr^2$  elements of $G^{\B}=\prod_{U\in
\B^G}[G_U,G_U]$ of the form $[h_1,h_2]$, where $h_1,h_2\in G_U$
for some $U\in\B^G$. Here $\B^G=\{g(U)|\; g\in G,\; U\in\B\}$. In
fact, we repeat the use of (3.1) as in the proof of Theorem 3.1.
Now if $\cld_G=d$, then every $g\in[G,G]$ is a product of at most
$d\rr^2$ elements of $G^{\B}$ of the form $[h_1,h_2]$, where
$h_1,h_2\in G_U$ for some $U\in\B^G$.

Since $G$ acts transitively inclusively  on $\B$ (and,
consequently, on $\B^G$), any $[h_1,h_2]$ as above is conjugate to
$[k_1,k_2]$ with $k_1,k_2\in G_B$. Then $[k_1,k_2]=[[f,k_1],k_2]$.
Hence $[k_1,k_2]$ is a product of four conjugates of $f$ and
$f^{-1}$.  It follows that $g$ is a product of at most $4d\rr^2$
conjugates of $f$ and $f^{-1}$, as claimed.

\end{proof}
\begin{cor}
If $G\leq\H(X)$ is factorizable and bounded, and $G$ acts
transitively inclusively  on some basis $\B$ of $X$, then $[G,G]$
is uniformly simple.
\end{cor}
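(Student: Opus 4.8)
The plan is to derive Corollary 4.4 as a direct application of Theorem 4.3, showing that its three hypotheses are automatically satisfied under the stated conditions. The goal is to produce, for an appropriate $G$-invariant basis and subordinate cover, the boundedness of $\cl_G$, transitive inclusiveness, and $\U$-factorizability with bounded fragmentation norm.

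First I would handle the boundedness of $\cl_G$ on $[G,G]$, which is condition (1) of Theorem 4.3. Since $G$ is assumed bounded and $\cl_G$ is itself a conjugation-invariant norm on $[G,G]$ (as noted in section 2), the same argument used in Corollary 3.4 applies: by Proposition 1.4 in \cite{bip} the boundedness of $G$ forces $\cl_G$ to be bounded. Condition (2) of Theorem 4.3, that $G$ acts transitively inclusively on $\B$, is exactly one of the hypotheses of the corollary, so there is nothing to prove there.

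The main work is condition (3): producing an open cover $\U\prec\B$ for which $G$ is $\U$-factorizable and bounded with respect to $\frag^{\U}$. Since $G$ is factorizable, it is $\U$-factorizable for \emph{every} open cover $\U$ of $X$, and in particular for any refinement of $\B$; using Lemma 3.3 (paracompactness of $X$) I would pass to a cover $\U$ with $\U\prec\B$. The delicate point is the boundedness of $\frag^{\U}$. Here I would invoke the equivalence recorded in section 2, namely that a group is bounded if and only if every conjugation-invariant norm on it is bounded. Since $\frag^{\U}$ is a conjugation-invariant norm on $G$ (assuming $\U$ is taken $G$-invariant, which one arranges by replacing $\U$ with $\U^G$), boundedness of $G$ yields $\fragd^{\U}_G<\infty$ directly.

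The step I expect to be the main obstacle is reconciling the $G$-invariance requirement of $\frag^{\U}$ with the refinement relation $\U\prec\B$: the norm $\frag^{\U}$ is only defined for $G$-invariant covers, yet Theorem 4.3 demands $\U\prec\B$. I would resolve this by choosing $\U$ $G$-invariant and finer than $\B$ simultaneously, which is possible because $\B$ itself can be taken $G$-invariant (Axiom 1 of Theorem 4.1 guarantees $g(U)\in\B$, so $\B^G=\B$), so one first star-refines $\B$ to some $\V$ via Lemma 3.3 and then passes to the $G$-invariant cover $\V^G$, which remains subordinate to $\B$ by $G$-invariance of $\B$. With all three hypotheses verified, Theorem 4.3 applies and gives that $[G,G]$ is uniformly simple, completing the proof.
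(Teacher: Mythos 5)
Your overall route is the same as the paper's: the corollary is obtained by checking the three hypotheses of Theorem 4.3. Your treatment of condition (1) (boundedness of $\cl_G$ via Proposition 1.4 of \cite{bip}, exactly as in Corollary 3.4) and of condition (2) (a hypothesis of the corollary) matches the paper, which disposes of the whole statement in one sentence.

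The difficulty is in your verification of condition (3), at precisely the step you flag as the main obstacle. You take $\V\prec\B$ by Lemma 3.3 and then pass to the saturation $\V^G$ so that $\frag^{\V^G}$ becomes conjugation-invariant, claiming that $\V^G$ remains subordinate to $\B$ because $\B$ is $G$-invariant. If ``subordinate'' means the star-refinement relation $\prec$ demanded by Theorem 4.3(3), this is false in general: for $V\in\V$ the star $\sta^{\V^G}(V)$ contains every translate $g(V')$ with $g\in G$, $V'\in\V$ that meets $V$, and such a translate can be arbitrarily large, so $\sta^{\V^G}(V)$ need not lie in a single element of $\B$ even when $\B$ is $G$-invariant. (Each element of $\V^G$ is indeed contained in some element of $\B^G$, but plain refinement is not enough: the proof of Theorem 4.3 uses the star condition to place $\supp(h_1)\cup\supp(h_2)$ inside one basis element whenever two factors of the fragmentations have intersecting supports.) What is actually needed is a single cover $\U\prec\B$ with $\fragd^{\U}_G<\infty$; boundedness of $G$ only controls conjugation-invariant norms, and $\frag^{\U}$ is conjugation-invariant only when $\U$ is $G$-invariant, so your reduction does not close this loop. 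A minor additional point: you appeal to Axiom 1 of Theorem 4.1 to make $\B$ $G$-invariant, but Corollary 4.4 does not assume that axiom; the relevant observation is the one made in the proof of Theorem 4.3, namely that transitive inclusiveness passes from $\B$ to $\B^G$. To be fair, the paper itself does not spell out how condition (3) is fulfilled, but as written your patch rests on a star-refinement claim that fails.
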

In fact, in view of Proposition 1.4 \cite{bip} $[G,G]$ is then
bounded in $\cl_G$, and the remaining hypotheses of Theorem 4.3
are fulfilled too.

\section{Perfectness of $[\tilde G,\tilde G]$}

Let $G$ be  a topological group. By $\mathcal{P}G$ we will denote
the totality of paths  (or isotopies) $\gamma:I\r G$ with
$\gamma(0)=e$ (where $I=[0,1]$). Then $\mathcal{P}G$ endowed with
the pointwise multiplication is a topological group. Next, $\tilde
G$ will stand for the universal covering group of $G$, that is
$\tilde G=PG/_{ \sim}$, where $\sim$ denotes the relation of the
homotopy rel. endpoints.

We introduce the following two operations on the space of paths
$\P G$. Let $\mathcal{P}^{\star}G=\{ \gamma \in \mathcal{P}G:
\gamma (t)=e \quad \textrm{for} \quad t \in [0,\frac{1}{2}] \}$.
For all $\gamma \in \P G$ we define $\gamma^{\star}$ as follows:

\begin{equation}\nonumber
 \gamma^{\star}(t)=
\left\{
\begin{array}{lcl}
 e& for & t \in [0,\frac{1}{2}]\\
\gamma(2t-1)& for& t \in [\frac{1}{2},1]
\end{array}
\right.
\end{equation}
Then $\gamma^{\star}\in\P^{\star}G$ and  the subgroup $P^{\star}G$
is the image of $\P G$ by the mapping $\star:\gamma\mapsto
\gamma^{\star}$. The elements of $\P^{\star}G$ are said to be
\wyr{special} paths in $G$. Clearly, the group of special paths is
preserved by conjugations, i.e. for each $g\in\P G$ we have
$\conj_g(\P^{\star}G)\s\P^{\star}G$ for every $g\in\P G$, where
$\conj_g(h)=ghg^{-1}$, $h\in\P G$.

Next, let $\mathcal{P}^{\square}G=\{ \gamma \in \mathcal{P}G:
\gamma (t)=\gamma(1) \quad \textrm{for} \quad t \in [\frac{1}{2},
1] \}$. For all $\gamma \in \P G$ we define $\gamma^{\square}$ by:

\begin{equation}\nonumber
 \gamma^{\square}(t)=
\left\{
\begin{array}{lcl}
 \gamma(2t)& for & t \in [0,\frac{1}{2}]\\
\gamma(1)& for& t \in [\frac{1}{2},1]
\end{array}
\right.
\end{equation}
As before $\gamma^{\square}\in\P^{\square}G$ and  the subgroup
$P^{\square}G$ coincides with the image of $\P G$ by the mapping
$\square:\gamma\mapsto \gamma^{\square}$.

\begin{lem}\label{zero}
For any $\gamma \in \P G$ we have $\gamma \sim \gamma^{\star}$ and
$\gamma\sim\gamma^{\square}$.
\end{lem}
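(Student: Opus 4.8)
The plan is to realize both $\gamma^{\star}$ and $\gamma^{\square}$ as \emph{reparametrizations} of $\gamma$ and to invoke the standard fact that a path is homotopic rel endpoints to any of its monotone reparametrizations. First I would record that all three paths share the same endpoints: since $\gamma^{\star}(0)=\gamma^{\square}(0)=e=\gamma(0)$ and $\gamma^{\star}(1)=\gamma^{\square}(1)=\gamma(1)$, the relation $\sim$ (homotopy rel endpoints) is meaningful in each case.

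Next I would exhibit the reparametrizing functions. Define $\sigma,\rho:[0,1]\r[0,1]$ by $\sigma(t)=0$ for $t\in[0,\frac{1}{2}]$ and $\sigma(t)=2t-1$ for $t\in[\frac{1}{2},1]$, and $\rho(t)=2t$ for $t\in[0,\frac{1}{2}]$ and $\rho(t)=1$ for $t\in[\frac{1}{2},1]$. Both are continuous, nondecreasing, and fix the endpoints $0$ and $1$, and directly from the definitions one has $\gamma^{\star}=\gamma\ci\sigma$ and $\gamma^{\square}=\gamma\ci\rho$.

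The core step is the reparametrization lemma: if $\tau:[0,1]\r[0,1]$ is continuous and nondecreasing with $\tau(0)=0$ and $\tau(1)=1$, then $\gamma\ci\tau\sim\gamma$. I would prove this with the linear homotopy in the parameter,
\[
H(s,t)=\gamma\big((1-s)t+s\,\tau(t)\big),
\]
noting that the argument is a convex combination of the two points $t,\tau(t)\in[0,1]$, hence lies in $[0,1]$, so $H$ is well defined and continuous into $G$. Then $H(0,\cdot)=\gamma$ and $H(1,\cdot)=\gamma\ci\tau$, while $H(s,0)=\gamma(0)=e$ and $H(s,1)=\gamma(1)$ for all $s$, so $H$ is a homotopy rel endpoints. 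Applying this with $\tau=\sigma$ and with $\tau=\rho$ yields $\gamma\sim\gamma^{\star}$ and $\gamma\sim\gamma^{\square}$ respectively.

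There is no serious obstacle here; the only points requiring a word of care are that the homotopy must stay inside $\P G$, i.e.\ each intermediate path must begin at $e$ — which is exactly the condition $H(s,0)=e$ guaranteed by $\tau(0)=0$ — and that $H$ be jointly continuous, which follows from the continuity of $\gamma$ together with that of the parameter map $(s,t)\mapsto(1-s)t+s\,\tau(t)$.
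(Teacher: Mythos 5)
Your proof is correct and is essentially the paper's argument: both produce a homotopy rel endpoints of the form $\gamma(\phi_s(t))$ for a continuous family of monotone reparametrizations $\phi_s$ interpolating between the identity and $\sigma$ (resp.\ $\rho$); the paper writes one such family explicitly, while you take the linear interpolation and package it as a general reparametrization lemma. The repackaging is harmless and all the checks (endpoints fixed, $H(s,0)=e$, joint continuity) are in order.
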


\begin{proof}
We have to find a homotopy $\Gamma$ rel. endpoints between
$\gamma$ and $\gamma^{\star}$. For all $s\in I$ define $\Gamma$ as
follows:
\begin{equation}\nonumber
\Gamma(t,s)= \left\{
\begin{array}{lcl}
e& for & t\in [0,\frac{s}{2}]\\
\gamma(\frac{2t-s}{2-s})& for& t\in [\frac{s}{2},1]
\end{array}
\right.
\end{equation}
It is easy to check that such  $\Gamma$ fulfils all the
requirements. Analogously the second claim follows.
\end{proof}

After these prerequisites let us return to homeomorphism groups.

Let $X$ be a paracompact space and  let $G\leq \H(X)$. Here
$\H(X)$ is endowed with the compact-open topology and $G$ with the
induced topology.  If $f\in\P G$ then we define
$\supp(f):=\bigcup_{t\in[0,1]}\supp(f_t)$. By $G_0$ we define the
subgroup of all $g\in G$ such that there is $f\in\P G$ such that
$f_1=g$. $G_0$ is called the \emph{identity component} of $G$.
Clearly $G_0\lhd G$.
\begin{dff}
We say that $G$ is \emph{isotopically factorizable} if for every
open cover $\U$ and every isotopy $f\in\P G$ there are $U_1\ld
U_r\in\U$ and $f_1\ld f_r\in\P G$ such that $f=f_1\ldots f_r$ and
$\supp(f_i)\s U_i$ for all $i$.
\end{dff}

Clearly, if $G$ is isotopically factorizable then $G_0$ is
factorizable.

\medskip

\emph{Proof of Theorem 1.4} For $f\in\P G$ by $\langle
f\rangle_{\sim}$ denote the homotopy rel. endpoints class of $f$.

Due to Proposition 3.2 and the assumption, for any $x\in X$ there
is $g,\bar g\in[G_0,G_0]$ such that $[g,\bar g](x)\neq x$.
Consequently,  there exists  an open cover $\U$ such that for all
$U\in\U$ there are $g,\bar g\in[G_0,G_0]$ such that $[g,\bar
g](U)\cap U=\emptyset$. Since $G_0\lhd G$, the same holds for
$\U^G$ instead of $\U$. In view of Lemma 5.1, there are $f, \bar
f\in\P^{\square}G$ such that $f_1=g$ and $\bar f_1=\bar g$.

  Choose $\V$ such that $\V\prec\U$ (Lemma 3.3) and denote
\begin{equation*}\P G^{\U}=\prod\limits_{U\in \U^G}[\P G_U,\P G_U].\end{equation*}
 First we notice that
$[\P G,\P G]\s \P G^{\U}$. As in the proof of Theorem 1.3 we use
(3.1) for elements of $\P G$ and the fact that $\P G$ is
$\V$-factorizable.

 Next, fix arbitrarily $U\in \U$ and let $f, \bar f\in\P^{\square}G$ as above. Put $\hat f=[f,\bar f]$.
 Then $\hat f_t(U)\cap U=\emptyset$ for all $t\in [\frac{1}{2}, 1]$. We
will show that for every $h, \bar h\in \P G_U$ the bracket
$[\langle h\rangle_{\sim},\langle\bar h\rangle_{\sim}]$ is
represented as a product of four commutators of elements of
$[\tilde G,\tilde G]$. In view of Lemma 5.1 choose $k,\bar
k\in\P^{\star}G$ such that $\langle k\rangle_{\sim}=\langle
h\rangle_{\sim}$ and $\langle \bar k\rangle_{\sim}=\langle \bar
h\rangle_{\sim}$. It follows that $[\hat fk\hat f^{-1}, \bar
k]=\id$ and $[[\hat f,k],\bar k]=[k,\bar k]$. Therefore, $[\langle
h\rangle_{\sim},\langle\bar h\rangle_{\sim}]$  is a product of
four commutators of elements of $[\tilde G,\tilde G]$.  \quad
$\square$

\begin{rem}
(1) Observe that one can formulate some results for $[\tilde
G,\tilde G]$, analogous to Theorems  1.3, 3.5 and 3.8, by assuming
that $G$ is isotopically factorizable, $G_0$ satisfies some
conditions in Def. 3.1,  $\cl_{\P G}$ is bounded, and $\P G$ is
bounded in $\frag^{\U}$.

(2) Obviously, $\tilde G$ and $[\tilde G,\tilde G]$ are not
simple, since $\pi(G)\lhd\tilde G$ and $[\pi(G),\pi(G)]\lhd[\tilde
G,\tilde G]$, where $\pi(G)$ is the fundamental group of $G$.
\end{rem}

\section{The commutator subgroup of a diffeomorphism group on open
manifold}

Assume $r=0,1\ld\infty$. Let a manifold $M$ be the interior of a
compact, connected manifold $\bar M$ of class $C^r$ with non-empty
boundary $\p$. By a \wyr{product  neighborhood} of $\p$ we mean a
closed subset  $P=\p\t[0,1)$ of $M$ such that  $\p\t[0,1]$ is
embedded in $\bar M$, and $\p\t\{1\}$ is identified with $\p$.

A \wyr{translation system} on the product manifold $N\t[0,\infty)$
(c.f. \cite{li1}, p.168) is a family $\{P_j\}_{j=1}^{\infty}$ of
closed product neighborhoods of $N\t\{\infty\}$ such that
$P_{j+1}\s\intt P_j$ and $\bigcap_{j=1}^{\infty}P_j=\emptyset$. By
a {\it ball} we mean an open ball with its closure compact and
contained in a chart domain.

Let $G\leq\D^r(M)$, where $r=0,1\ld\infty$. For a subset $U\s M$
denote by $G(U)$ the subgroup of all elements of $G$ which can be
joined with the identity by an isotopy in $G$ compactly supported
in $U$.

\begin{dff}
Let $\B$ be a cover of $M$ by balls.
 $G$ is called
\wyr{$\B$-factorizable} if for any $f\in G$ there are  a product
neighborhood $P=\p\t[0,1)$, and  a family of diffeomorphisms
$g,g_1\ld g_{\rr}\in G$  such that:

(1) $f=g g_1\cdots g_{\rr}$ with $g\in G(P)$ and $g_j\in G(B_j)$,
where $B_j\in\B$ for $j=1\ld \rr$.

Furthermore, for any product neighborhood $P$ and for any $g\in
G(P)$ there is a sequence of reals from (0,1) tending to 1
\[0<a_1<\bar a_1<\bar b_1<b_1<a_2<\ldots<a_n<\bar a_n<\bar
b_n<b_n<\ldots<1\] and $h\in G(P)$ such that

(2) $h=g$ on $\bigcup_{n=1}^{\infty} \p\t[\bar a_n,\bar b_n]$;

(3) $h=\id$ if $g=\id$.

Put $D_n:=\p\t(a_n,b_n)$ and $D:=\bigcup_{n=1}^{\infty}D_n$. Then
we also assume that:

(4) $\supp(h)\s D$;

(5) for the resulting decomposition $h=h_1h_2\ldots$ with respect
to $D=\bigcup_{n=1}^{\infty}D_n$ we have $h_n\in G(D_n)$ for all
$n$.

 $G$ is called \wyr{factorizable (in the wider sense)} if it is
$\B$-factorizable for every cover $\B$ of $M$ by balls.

Finally, if $G$ factorizable, for any $f\in G$ we define
  $\Frag_{G}(f)$ as the smallest $\rr$ such that there are a family of balls
  $\{B_{j}\}$, a product neighborhood $P$ and
  and a decomposition of $f$ as in (1).  Then
   $\Frag_{G}$ is a conjugation-invariant norm on $G$,
called the \wyr{fragmentation norm}. In fact, since
$G\leq\D^r(M)$, any $g\in G$ does not change the ends of $M$ so
that it takes (by conjugation) any decomposition as in (1) into
another such a decomposition.

Define $\Fragd_{G}:=\sup_{g\in G}\Frag_{G}(g)$, the diameter of
$G$ in $\Frag_{G}$. Consequently, $\Frag_{G}$ is bounded iff
$\Fragd_{G}<\infty$.

\end{dff}

\begin{rem}
The reason for introducing Def. 6.1 is the absence of isotopy
extension theorems or fragmentation theorems for some geometric
structures. Roughly speaking, $G$ satisfies Def. 6.1 if all its
elements can be joined with id by an isotopy in $G$  and
appropriate versions of the above mentioned theorems are
available.
\end{rem}

Let $\diff^r(M)$ (resp. $\diff^r_c(M)$) be the group of all $C^r$
diffeomorphisms of $M$ (resp. with compact support). To illustrate
Def. 6.1 we consider the following
\begin{exa}
The group $\diff^r(\R^n)$ does not satisfy Def.6.1. The reason is
that in this case any $f\in\diff^r(\R^n)$ would be isotopic to id
due to 6.1(1) which is not true. Next, any $f\in\diff^r_c(\R^n)$
is isotopic  to the identity but the isotopy need not be compactly
supported. It follows that $\diff^r_c(\R^n)$ does not fulfil
Def.6.1.(1). The exception is $r=0$, when the Alexander trick is
in use (see e.g. \cite{ed-ki}, p.70) and any compactly supported
homeomorphism on $\R^n$ is isotopic to id by a compactly supported
isotopy. It follows that $\diff^0_c(\R^n)$ is factorizable in view
of \cite{ed-ki}.

Let $C=\R\t\mathbb S^1$ be the annulus. Then there is the twisting
number epimorphism $\diff^r_c(C)\r\mathbb Z$. It follows that
$\diff^r_c(C)$ is unbounded in view of Lemma 1.10 in \cite{bip}.
On the other hand, $\diff^r_c(C)$ is not factorizable.
\end{exa}

\begin{dff}
\begin{enumerate}
 \item $G$ is said to be \wyr{determined on compact subsets}
 if the following is satisfied. Let $f\in\D^r(M)$.
 If there are a sequence of relatively compact subsets  $U_1\s\overline U_1\s
 U_2\s\ldots\s U_n\s\overline{U}_n\s
 U_{n+1}\s\ldots$ with  $\bigcup U_n=M$ and  a sequence $\{g_n\}$, $n=1,2\ld$ of elements
 of $G$ such that $f|_{U_n}=g_n|_{U_n}$ for $n=1,2\ld$ then we
have $f\in G$.

\item We say that $G$ \wyr{admits translation systems} if  for any
sequence $\{\lambda_n\}$, $n=0,1,\ldots$, with
$\lambda_n\in(0,1)$, tending increasingly to 1, there exists a
$C^r$-mapping $[0,\infty)\ni t\mapsto f_t\in G$ supported in the
interior of $P$, with $f_0=\id$, $f_j=(f_1)^j$ for $j=2,3,\dots$,
and such that for the translation system $P_n=\p_i\t[\lambda_n,1)$
one has $f_1(P_n)=P_{n+1}$ for $n=0,1,2,\ldots$.
\end{enumerate}
\end{dff}

By using suitable isotopy extension theorems (c.f. \cite{ed-ki},
\cite{hir}, \cite{ban2}) we have

\begin{prop} \cite{ry6}
The groups $\D^r(M)$, $r=0,1\ld\infty$,  satisfy Definitions 6.1
and 6.4.
\end{prop}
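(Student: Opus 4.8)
The plan is to take $G=\D^r(M)$ and verify the four conditions one at a time, the whole argument resting on two standard tools: the $C^r$ isotopy extension theorem (in the versions of \cite{ed-ki}, \cite{hir}, \cite{ban2}) and the half-open collar structure $P=\p\t[0,1)$ of the ends of $M$, which is available precisely because $M=\intt\bar M$ with $\bar M$ compact. The three conditions 6.4(1), 6.4(2) and 6.1(1) are comparatively routine, while 6.1(2)--(5) carry the substance.

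I would dispose of the easy conditions first. Condition 6.4(1) is immediate for $G=\D^r(M)$: its hypothesis already grants $f\in\D^r(M)$, which is exactly the conclusion, so there is nothing to prove. For 6.4(2) I would build the required translation map directly on a collar: choose a $C^r$ vector field $X$ on $[0,1)$ with $X=0$ near $0$, $X>0$ further out, and $\int_{\lambda_n}^{\lambda_{n+1}}\!dx/X(x)=1$ for every $n$, so that the flow time from $\lambda_n$ to $\lambda_{n+1}$ is exactly one; pulling $X$ back along the projection $P=\p\t[0,1)\to[0,1)$ and damping it to be supported in $\intt P$ yields a $C^r$ family $t\mapsto f_t\in G$ with $f_0=\id$, $f_j=(f_1)^j$, and $f_1(P_n)=P_{n+1}$ for $P_n=\p\t[\lambda_n,1)$. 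For 6.1(1) I would use fragmentation: given $f\in\D^r(M)$ and an isotopy from $\id$ to $f$, fix a product neighborhood $P$ and a slightly smaller collar $P'\s\intt P$; since $M\setminus\intt P'$ is relatively compact it is covered by finitely many balls $B_1\ld B_\rr\in\B$, and the isotopy extension theorem lets me fragment $f$ as $f=g\,g_1\cdots g_\rr$ with $g\in G(P)$ and $g_j\in G(B_j)$. The least such $\rr$ is then $\Frag_G(f)$, and conjugation invariance is clear since elements of $\D^r(M)$ preserve the ends.

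The core of the proposition is the ``spreading'' construction of 6.1(2)--(5). Given $P=\p\t[0,1)$ and $g\in G(P)$, I would first fix the bands: a sequence $0<a_1<\bar a_1<\bar b_1<b_1<a_2<\cdots\to 1$, so that the open collars $D_n=\p\t(a_n,b_n)$ are pairwise disjoint and accumulate only at the end of $M$. Using the product structure I would place in each $D_n$ a copy $h_n\in G(D_n)$ of $g$, obtained by conjugating $g$ by a collar diffeomorphism carrying a reference band onto $D_n$ and chosen so that $h_n$ coincides with $g$ on the middle slab $\p\t[\bar a_n,\bar b_n]$; setting $h_n=\id$ whenever $g=\id$ secures 6.1(3). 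Because the $D_n$ are mutually disjoint and locally finite in $M$, the infinite product $h=h_1h_2\cdots$ converges to a well-defined $C^r$ diffeomorphism with $\supp(h)\s D=\bigcup_n D_n$, which is 6.1(4) together with the decomposition 6.1(5); agreement of each copy with $g$ on its middle slab gives 6.1(2). That $h\in G$ follows because the isotopies joining each $h_n$ to $\id$ can be multiplied together into a single $C^r$ isotopy from $\id$ to $h$, the product being genuinely $C^r$ precisely because the supports $D_n$ march off to the removed end and hence form a locally finite family in $M$.

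The main obstacle is this last construction, and within it the delicate point is meeting 6.1(2) and 6.1(4) simultaneously: I must spread $g$ into infinitely many bands so that the total support stays inside $D$ \emph{and} an exact copy of $g$ survives on each middle slab $\p\t[\bar a_n,\bar b_n]$, while the result remains a bona fide element of $\D^r(M)$. This forces the collar reparametrizations and the local finiteness of $\{D_n\}$ to be set up compatibly, and it is here that the hypothesis $M=\intt\bar M$ (so the bands accumulate at an end rather than at an interior point) and the isotopy extension theorem are indispensable; once they are in place, the remaining verifications are formal.
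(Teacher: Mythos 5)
The paper itself gives no argument for this proposition beyond citing \cite{ry6} and remarking that suitable isotopy extension theorems are the relevant tool, so your attempt must be judged on its own terms. Your verifications of 6.4(1), 6.4(2) and 6.1(1) are sound, but the core construction for 6.1(2)--(5) has a genuine gap. First, a point of logic: you fix the bands $D_n=\p\t(a_n,b_n)$ before looking at $g$, whereas the sequence $a_n<\bar a_n<\bar b_n<b_n$ must be chosen \emph{after} $g$ (the definition quantifies it existentially, depending on $g$). Indeed conditions (2), (4), (5) together force $g(\p\t[\bar a_n,\bar b_n])\s D_n$: since $h_n\in G(D_n)$ preserves $D_n$ and $h=g$ on the middle slab, $g$ must carry each middle slab into its own band. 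For an arbitrary $g\in G(P)$ and bands fixed in advance this simply fails; the bands have to be engineered around $g$.

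Second, and more seriously, the device you propose for producing $h_n$ --- conjugating $g$ by a collar diffeomorphism carrying a reference band onto $D_n$, ``chosen so that $h_n$ coincides with $g$ on the middle slab'' --- cannot work. A conjugate $\phi_n g\phi_n^{-1}$ has support $\phi_n(\supp(g))$; to squeeze this into the relatively compact band $D_n$ you would need $\phi_n$ to carry the end-neighborhood $P$ (where $\supp(g)$ may accumulate at the end) into $D_n$, which no diffeomorphism of $M$ does, and if $\phi_n$ is merely an embedding of $\intt(P)$ into $D_n$ the resulting map fails to be continuous, and fails $\supp(h_n)\s D_n$, at the outer boundary of $D_n$. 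Even setting this aside, a conjugate of $g$ agrees with $g$ pointwise on a prescribed set only under conditions on $\phi_n$ that are incompatible with the squeezing. The missing idea is exactly what the paper's one-line remark points to: take a $C^r$-isotopy $g_t$ from $\id$ to $g$ supported in $P$, choose the bands so large that $\bigcup_{t}g_t(\p\t[\bar a_n,\bar b_n])\s D_n$ (possible by compactness of each middle slab), and then apply the isotopy extension theorem to the restricted isotopy $g_t|_{\p\t[\bar a_n,\bar b_n]}$ to obtain $h_n\in G(D_n)$ with $h_n=g$ on the middle slab; the locally finite product $h=h_1h_2\cdots$ then satisfies (2)--(5). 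In short, you invoke the isotopy extension theorem for fragmentation, where it is indeed needed, but replace it by conjugation precisely where it is indispensable.
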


The following result is essential to describe the structure of
$[G,G]$. Though it was proved in \cite{ry6}, we give the proof of
it for the sake of completeness.

\begin{lem}
If $G$ satisfies  Definitions 6.1 and 6.4, then any $g\in G(P)$,
where $P$ is a product neighborhood of $\p$, can be written as a
product of two commutators of elements of $G(P)$.
\end{lem}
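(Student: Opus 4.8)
The plan is to run an infinite ``swindle'' along the collar $P=\p\t[0,1)$, driven by the translation system of Definition 6.4(2); the two commutators will arise from treating separately the ``model'' $h$ that Definition 6.1 attaches to $g$ and the residual term $h^{-1}g$. First I would apply Definition 6.1(2)--(5) to the given $g\in G(P)$ to obtain the interlacing reals $0<a_1<\bar a_1<\bar b_1<b_1<a_2<\ldots<1$, the pairwise disjoint collar pieces $D_n=\p\t(a_n,b_n)$, and an element $h\in G(P)$ with $\supp(h)\s D=\bigcup_nD_n$, with $h=g$ on $\bigcup_n\p\t[\bar a_n,\bar b_n]$, and with a decomposition $h=h_1h_2\cdots$, where $h_n\in G(D_n)$. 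Since the $D_n$ are disjoint this product is locally finite and the factors $h_n$ commute. Next, applying Definition 6.4(2) to a sequence $\{\lambda_n\}$ interlacing the $a_n,b_n$, I would fix a translation $f:=f_1\in G$, supported in $\intt P$ and joined to $\id$ by the isotopy $f_t$, arranged so that $f(D_n)=D_{n+1}$ for every $n$; then $f\in G(P)$ and $f^{k}h_nf^{-k}\in G(D_{n+k})$.

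The heart of the argument is a telescoping swindle expressing $h$ as a single commutator. Set $w_1:=h_1$ and recursively $w_m:=h_m\,(f w_{m-1}f^{-1})=h_m(fh_{m-1}f^{-1})\cdots(f^{m-1}h_1f^{-(m-1)})$; each factor of $w_m$ is supported in $D_m$, so $w_m\in G(D_m)$. As every compact subset of $M$ meets only finitely many $D_m$, the product $W:=\prod_m w_m$ is locally finite and hence a well-defined diffeomorphism, and Definition 6.4(1) (``$G$ is determined on compact subsets'') guarantees $W\in G$, indeed $W\in G(P)$. Computing the $D_k$-component of $W(fWf^{-1})^{-1}$ and using the recursion $w_k=h_k(fw_{k-1}f^{-1})$ shows that $[W,f]=h$.

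It remains to realize $c:=h^{-1}g$ as a second commutator. By Definition 6.1(2) we have $c=\id$ on each strip $\p\t[\bar a_n,\bar b_n]$, so $\supp(c)$ lies in the pairwise disjoint, relatively compact pieces $\tilde E_n:=\p\t[\bar b_n,\bar a_{n+1}]$ (with $\bar b_0:=0$), and $c$ splits as a locally finite product $c=\prod_n c_n$, $c_n:=c|_{\tilde E_n}$. Because $c$ is already the identity near the ends of each $\tilde E_n$, Definition 6.4(1) yields $c_n\in G$, and in fact $c_n\in G(\tilde E_n)$. Choosing via Definition 6.4(2) a translation $f'$ that shifts the pieces $\tilde E_n$ onto one another and re-running the telescoping swindle for this family, I obtain $c=[W',f']$ with $W',f'\in G(P)$. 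Therefore $g=h\,c=[W,f]\,[W',f']$ is a product of two commutators of elements of $G(P)$, as required.

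The hard part will be the bookkeeping that legitimizes the infinite products. One must verify that $W$ and $W'$ are genuine diffeomorphisms of $M$ landing in $G$ through Definition 6.4(1); that the translations supplied by Definition 6.4(2) can be chosen to shift the two \emph{different} families $\{D_n\}$ and $\{\tilde E_n\}$ exactly as needed; and---most delicately---that each fragment $c_n$ obtained by cutting $c$ off to a single gap piece again lies in $G(\tilde E_n)$. This final point, where the residual term is fragmented and each fragment shown to be isotopic to $\id$ within its piece, is exactly where the local (fragmentation) content of Definitions 6.1 and 6.4 is indispensable.
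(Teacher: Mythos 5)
Your proposal is correct and follows essentially the same route as the paper: the same splitting $g=h\cdot(h^{-1}g)$ via Definition 6.1(2)--(5), and the same infinite telescoping product along the translation system of Definition 6.4(2), legitimized by Definition 6.4(1), to exhibit each factor as a single commutator $[\tilde h,f_1]$ with entries in $G(P)$. The one point you flag as delicate --- that each fragment $c_n$ of the residual term lies in $G(\tilde E_n)$ --- is actually not needed: as in the paper, the swindle can be run on $c$ as a whole, since only the locally finite product $\prod_k f'^k c f'^{-k}$ (not its individual gap components) must be recognized as an element of $G$ via Definition 6.4(1).
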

\begin{proof}
We may assume that $g\in G(\intt( P))$. Choose as in Def. 6.1 a
sequence $0<a_1<\bar a_1<\bar b_1<b_1<a_2<\ldots<a_n<\bar a_n<\bar
b_n<b_n<\ldots<1$ and $h\in G(P)$ such that conditions (2)-(5) in
Def. 6.1 are fulfilled.
%$h=g$ on $\bigcup_{n=1}^{\infty} \p\t[\bar
%a_n,\bar b_n]$ and $\supp(h)\s\bigcup_{n=1}^{\infty} \p\t( a_n,
%b_n)$.
Put $\bar h=h^{-1}g$, that is $g=h\bar h$. Then $\supp(\bar h)$ is
in $(0,\bar a_1)\cup\bigcup_{n=1}^{\infty}(\bar b_n,\bar
a_{n+1})$, and $\bar h=g$ on  $[0, a_1]\cup\bigcup_{n=1}^{\infty}[
b_n, a_{n+1}]$. We show that $h$ is a commutator of elements in
$G(\intt(P))$.

Choose arbitrarily $\lambda_0\in (0,a_1)$ and
$\lambda_n\in(b_n,a_{n+1})$ for $n=1,2,\ldots$. In light of Def.
6.4(2) there exists an isotopy  $[0,\infty)\ni t\mapsto f_t\in G$
supported in $\p\t(0,1)$, such that $f_0=\id$ and
$f_j(P_n)=P_{n+j}$ for $j=1,2,\ldots$ and for $n=0,1,2,\ldots$,
where $P_n=\p\t[\lambda_n,1)$ for $n=0,1,\ldots$. Now define
$\tilde h\in G(\intt(P))$ as follows. Set $\tilde h=h$ on
$\p\t[0,\lambda_1)$, and $\tilde
h=h(f_1hf_1^{-1})\ldots(f_nhf_n^{-1})$ on $\p\t[0,\lambda_{n+1})$
for $n=1,2\ldots$. Here $f_n=(f_1)^n$. Then $\tilde
h|_{\p\t[0,\lambda_n)}$ is a consistent family of functions, and
$\tilde h=\bigcup_{n=1}^{\infty} \tilde h|_{\p\t[0,\lambda_n)}$ is
a local diffeomorphism. It is easily checked that $\tilde h$ is a
bijection. Due to Def. 6.4(1) $\tilde h\in G(\intt (P))$.

By definition we have the equality $\tilde h=hf_1\tilde h
f_1^{-1}$. It follows that $h=\tilde h f_1 \tilde
h^{-1}f_1^{-1}=[\tilde h,f_1]$. Similarly, $\bar h$ is a
commutator of elements of $G(P)$. The claim follows.
\end{proof}

\begin{dff}
Let $G$ satisfy Def. 6.1. Then
\begin{enumerate} \item the symbol $G_c$ stands for the subgroup
of all $f\in G$ such that there is a decomposition $f=gg_1\ldots
g_{\rr}$ as in Def. 6.1(1) with $g=\id$; \item $G$ is said to be
\emph{localizable} if for any $f\in G$ and any compact $C\s M$
there is $g\in G_c$ such that $f=g$ on $C$.
\end{enumerate}
\end{dff}
Clearly $G_c$ is a subgroup of the group of compactly supported
members of $G$. However, the converse is not true: for $G=\D^r(C)$
take a compactly supported diffeomorphism of $C$ with nonzero
twisting number (Example 6.3). For the reason of introducing
localizable groups, see Remark 6.2. It follows from the isotopy
extension theorems (\cite{ed-ki}, \cite{hir}) that $\D^r(M)$ is
localizable.

\begin{prop} Let $\frag_G=\frag_G^{\B}$, where $\B$ the family of all balls on $M$ (c.f. section 2). We have
$\fragd_{G_c}=\Fragd_{G_c}$.
\end{prop}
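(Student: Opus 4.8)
The plan is to regard $\frag_G=\frag^{\B}_G$ and $\Frag_G$ as two conjugation-invariant norms on the subgroup $G_c$ and to compare their diameters by proving the two inequalities $\Fragd_{G_c}\le\fragd_{G_c}$ and $\fragd_{G_c}\le\Fragd_{G_c}$. First I would record the only structural facts needed: every $f\in G_c$ is, by Definition 6.1 and the definition of $G_c$, compactly supported and admits a decomposition $f=g_1\cdots g_{\rr}$ with $g_j\in G(B_j)$ for balls $B_j\in\B$ and trivial product neighborhood piece; and conversely a wider-sense decomposition of an arbitrary $f$ carries a single piece $g\in G(P)$ supported in a product neighborhood $P=\p\t[0,1)$, which the norm $\Frag_G$ does not count.

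The inequality $\Fragd_{G_c}\le\fragd_{G_c}$ is the direct one. Given $f\in G_c$, I would take a $\frag_G$-optimal factorization $f=g_1\cdots g_{\rr}$ with $\supp(g_i)\s B_i\in\B$, so that $\rr=\frag_G(f)$. Since each $g_i$ is supported in a ball and lies in $G_c$, it belongs to $G(B_i)$; hence this is simultaneously a decomposition of the type demanded in Definition 6.1(1), now with product neighborhood piece $g=\id$. Therefore $\Frag_G(f)\le\rr=\frag_G(f)$, and passing to the supremum over $f\in G_c$ yields $\Fragd_{G_c}\le\fragd_{G_c}$.

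For the reverse inequality I would fix $f\in G_c$ and a $\Frag_G$-optimal decomposition $f=g\,g_1\cdots g_{\rr}$ with $g\in G(P)$, $g_j\in G(B_j)$ and $\rr=\Frag_G(f)$. Because $f\in G_c$ and each $g_j$ is compactly supported, the collar piece $g=f(g_{\rr}\cdots g_1)^{-1}$ is itself compactly supported, say $\supp(g)\s\p\t[0,c]$ with $c<1$. The goal is then to show that for such an $f$ the optimal wider-sense decomposition may be chosen with $g=\id$, so that $f$ is a product of at most $\rr$ ball pieces and $\frag_G(f)\le\rr=\Frag_G(f)$; the supremum over $f\in G_c$ then gives $\fragd_{G_c}\le\Fragd_{G_c}$, completing the proof. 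The main obstacle is precisely the elimination of this compactly supported collar piece without increasing the ball count. Here I would invoke that $G$ is localizable (Definition 6.7) and admits translation systems (Definition 6.4(2)): localizability guarantees that $g$, being compactly supported, can be represented inside $G_c$, and the translation maps $f_1$ with $f_1(P_n)=P_{n+1}$ allow the compact support of $g$ to be displaced toward the end of the collar and absorbed --- by the same infinite-swindle mechanism used in the proof of Lemma 6.6 --- into the structure already accounted for by the factors $g_1\ld g_{\rr}$. Verifying that this absorption is achievable at no additional cost to the fragmentation norm is the delicate point on which the whole equality rests.
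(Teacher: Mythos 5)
Your first inequality, $\Fragd_{G_c}\le\fragd_{G_c}$, is correct and is exactly the paper's argument: a $\frag^{\B}$-factorization of $f\in G_c$ into ball-supported pieces is already a decomposition of the type of Def.~6.1(1) with trivial collar factor, so $\Frag_{G_c}\le\frag_{G_c}$ pointwise. (Both you and the paper pass silently from ``$\supp(g_i)\s B_i$'' to ``$g_i\in G(B_i)$'', which strictly speaking requires the factors to be joinable to the identity inside $B_i$; since the paper makes the same gloss, I do not count it against you.)

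The reverse inequality is where your proposal has a genuine gap. You aim at the pointwise bound $\frag_{G_c}(f)\le\Frag_{G_c}(f)$ by ``absorbing'' the collar piece $g\in G(P)$ of a $\Frag$-optimal decomposition into the ball factors, invoking localizability, the translation systems of Def.~6.4(2), and the swindle of Lemma~6.6, all ``at no additional cost to the fragmentation norm''. That step is only announced, never carried out, and you yourself flag it as the point on which the whole equality rests. As stated it is not workable: the swindle of Lemma~6.6 writes a collar element as a product of \emph{commutators} supported throughout the (non-compact) collar, not as a product of ball-supported factors, and nothing in that mechanism controls the number of balls needed; so there is no reason the elimination of $g$ is free. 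Moreover the pointwise inequality you are after is stronger than what Proposition~6.8 asserts, which is only an equality of \emph{diameters}. The paper's proof avoids touching the collar piece of a fixed $f$ altogether: given a decomposition $f=g_0g_1\cdots g_{\rr'}$ as in 6.1(1) with $\rr'=\Frag_{G_c}(f)$, it passes to the element $g_0^{-1}f=g_1\cdots g_{\rr'}$, which lies in $G_c$ by definition of $G_c$ and visibly satisfies $\frag_{G_c}(g_0^{-1}f)\le\rr'$, and then compares the suprema over $G_c$ rather than the two norms at the same element. You should either adopt that comparison of suprema or supply an actual proof of your absorption step; as written, the second half of your argument is a plan rather than a proof.
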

\begin{proof}
If $g\in G_c$ then $\Frag_{G_c}(g)\leq\frag_{G_c}(g)$, since any
fragmentation of $g$ supported in balls is of the form from Def.
6.1(1). On the other hand, if $g=g_0g_1\ldots g_{\rr'}$ with
$\rr'<\rr=\frag_{G_c}(g)$ is as in 6.1(1), then $g_0^{-1}g\in G_c$
and $\frag_{G_c}(g_0^{-1}g)\leq \rr'$. Thus,
$\fragd_{G_c}=\Fragd_{G_c}$.
\end{proof}

For any $M$ as above a theorem of McDuff \cite{md} states that
$\D^r(M)$ is perfect. We generalize it as follows.

\begin{thm} Let $M$ be an open $C^r$-manifold ($r=0,1\ld\infty$) such that $M=\intt\bar M$,
where $\bar M$ is a compact manifold. Suppose that  $G\leq\D^r(M)$
satisfies Definitions 6.1, 6.4 and 6.7, and that $G_c$ is
non-fixing. Then $[G,G]$ is perfect.
\end{thm}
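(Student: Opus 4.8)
The plan is to prove the nontrivial inclusion $[G,G]\subseteq[[G,G],[G,G]]$, the reverse being automatic, by checking that every generator $[f_1,f_2]$ with $f_1,f_2\in G$ already lies in $N:=[[G,G],[G,G]]$. The first thing I would record is that $N$ is normal in $G$: since $[G,G]\lhd G$ and $N$ is characteristic in $[G,G]$, conjugation by elements of $G$ preserves $N$. This normality is the real workhorse, because it lets me discard any factor that is recognizably a $G$-conjugate of an element of $N$, and it is what makes the bookkeeping in the fragmentation steps tolerable.

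The decisive structural input is Lemma 6.6: every $g\in G(P)$ is a product of two commutators of elements of $G(P)$. Since $G(P)\subseteq[G,G]$, these are commutators of elements of $[G,G]$, so in fact $G(P)\subseteq N$; thus the whole collar part of a factorization is absorbed into $N$ at the outset. Using this I would reduce to the compactly supported case. Given $[f_1,f_2]$, factor $f_i=a_ic_i$ as in Definition 6.1, with $a_i\in G(P_i)\subseteq N$ and $c_i\in G_c$. Expanding by the commutator identities (3.1) and repeatedly using that, for $a\in N\lhd G$, both the brackets $[a,\cdot]$ and the differences $a(\,\cdot\,)a^{-1}(\,\cdot\,)^{-1}$ lie in $N$, one sees that modulo $N$ the element $[f_1,f_2]$ equals $[c_1,c_2]$. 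Hence it suffices to prove $[c_1,c_2]\in N$ for $c_1,c_2\in G_c$.

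From here the argument is the Ling/Theorem~1.3 machinery applied to $G_c$, whose elements are compactly supported and factorize into balls, so that $G_c$ is factorizable in the usual sense of Definition 1.1. I would fix a cover $\B$ by balls, pass to a star-refinement $\V\prec\B$ (Lemma 3.3), fragment $c_1,c_2$ into ball-supported pieces, and expand $[c_1,c_2]$ via (3.1) into a product of $G$-conjugates of commutators $[p,q]$ with $p,q\in G(B)$ for a common ball $B$ (pieces in disjoint balls contribute trivially); by normality of $N$ it is then enough to place each $[p,q]$ into $N$. For the displacement step I would use that $G_c$ is non-fixing and factorizable, so by Proposition 3.2 it is locally moving and $[G_c,G_c]$ is $1$-non-fixing; refining $\B$ if necessary, for each ball $B$ there is a commutator $h=[f',g']$ with $f',g'\in[G_c,G_c]\subseteq[G,G]$, hence $h\in N$, and $h(B)\cap B=\emptyset$. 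Then $hph^{-1}$ commutes with $q$, so $[[h,p],q]=[p^{-1},q]$, and replacing $p$ by $p^{-1}$ realizes every $[p,q]$ this way; since $h\in N\lhd G$ this bracket lies in $N$. Thus $[p,q]\in N$, so $[c_1,c_2]\in N$ and finally $[f_1,f_2]\in N$, giving $[G,G]=N$.

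The step I expect to be the genuine obstacle is the reduction to $G_c$ together with the guarantee that the displacing elements can be taken inside $N$. The reduction rests entirely on Lemma 6.6 — this is precisely why Definitions 6.1 and 6.4 (product neighborhoods and translation systems) were arranged so that the non-compactly-supported collar part decomposes into commutators of elements of $G(P)$; without it the boundary contribution cannot be pushed into $N$. The second point is where the hypothesis that $G_c$ is non-fixing is used essentially, since it is what upgrades local moving to the existence of \emph{commutators} of $[G_c,G_c]$-elements displacing each ball, and these automatically live in $N$. The remaining work — organizing the fragmentation of $[c_1,c_2]$ so that interacting pieces share a common ball and verifying that all the conjugations are swallowed by the normality of $N$ — is routine once these two points are secured.
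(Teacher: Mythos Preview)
Your argument is correct and follows the same two–stage skeleton as the paper's proof (collar part via Lemma~6.6, compactly supported part via Ling's mechanism), but the execution differs in one genuinely interesting way. The paper uses only that $G(P)$ is perfect, i.e.\ $G(P)\subseteq[G,G]$, and then needs localizability (Def.~6.7(2)) to rewrite the mixed brackets $[g_i,h_j]$ as commutators of elements of $G_c$; after that it simply cites Theorem~1.2 to get $[G_c,G_c]$ perfect. You instead iterate Lemma~6.6 to obtain the stronger inclusion $G(P)\subseteq N=[[G,G],[G,G]]$, and since $N\lhd G$ the collar factors $a_i$ are absorbed immediately, giving $[f_1,f_2]\equiv[c_1,c_2]\pmod N$ with no appeal to localizability at all. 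That is a real simplification over the paper's route. On the other hand, once you are down to $[c_1,c_2]$ with $c_i\in G_c$, you re-derive the Ling argument by hand; the paper just invokes Theorem~1.2 to conclude $[G_c,G_c]=[[G_c,G_c],[G_c,G_c]]\subseteq N$, which is shorter and yields the same conclusion. One small point of care: your sentence ``$G_c$ is factorizable in the usual sense of Definition~1.1'' is exactly the hypothesis needed for Theorem~1.2/Prop.~3.2, and the paper tacitly assumes it too; in your set-up you can sidestep it entirely by choosing the displaceable-ball cover $\B$ and its star-refinement $\V$ \emph{first} and then applying Def.~6.1 to $f_i$ with respect to $\V$, so that $c_i$ comes pre-fragmented and Prop.~3.2 is only needed for the displacement step, where the same device (absorbing the $G(P)$-piece into $N$) applies again.
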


\begin{proof} In view of Def. 6.1 for an arbitrary
$f\in G$ we can write $f=gh$, where $g\in G(P)$ and $h\in G_c$.
Let $[f_1,f_2]\in [G,G]$ with $f_1=g_1h_1$ and $f_2=g_2h_2$ as
above. Since $G_c$ is localizable we have $[g_1,h_2], [g_2,h_1]\in
[G_c,G_c]$. Due to Lemma 6.6 $G(P)$ is perfect, that is
$g_1,g_2\in[G,G]$. It follows from (3.1) that
$[f_1,f_2]=\phi[k_1,k_2][k'_1,k'_2][k''_1,k''_2]$, where
$\phi\in[[G,G],[G,G]]$ and $k_1,k_2,k'_1,k'_2,k''_1,k''_2\in G_c$.
But by Theorem 1.2 $[G_c,G_c]$ is also perfect. It follows  that
$[G,G]$ is perfect too. \end{proof}

\begin{thm}
Under the assumptions of Theorem 6.9, if $\cl_{G_c}$ and
$\frag_G^{\V}$ are bounded, where $\V$ is an arbitrary open cover
with $\V\prec\B$, then $[G,G]$ is uniformly perfect.
\end{thm}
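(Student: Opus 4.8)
The plan is to reduce the uniform perfectness of $[G,G]$ to two inputs that are essentially already in hand: every element of a product neighbourhood $G(P)$ is a product of at most \emph{two} commutators by Lemma 6.6, while the remaining ``compactly supported'' part lands in $[G_c,G_c]$, which I will make uniformly perfect by the machinery of Section 3. I first record the two structural facts I shall lean on. Since $G\leq\D^r(M)$ does not change the ends of $M$, conjugation preserves the decompositions of Def.~6.1(1), so $G_c\lhd G$ and hence $[G_c,G_c]\lhd G$; and by localizability (Def.~6.7) any $g\in G$ agrees on a prescribed compact set with an element of $G_c$. The key reduction is then: every $\xi\in[G,G]$ can be written as $\xi=\zeta\eta$ with $\zeta\in[G_c,G_c]$ and $\eta\in G(P)$ for a single product neighbourhood $P$ of $\p$. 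For one generator, write $f=gh$ and $f'=g'h'$ with $g,g'\in G(P)$, $h,h'\in G_c$ (Def.~6.1(1), using a common $P$, which exists since only finitely many product neighbourhoods occur); expanding via the commutator formulae (3.1) gives
\[ [gh,g'h']=\big(g[h,g']g^{-1}\big)\big(gg'[h,h']g'^{-1}g^{-1}\big)\,[g,g']\,\big(g'[g,h']g'^{-1}\big). \]
By localizability the mixed brackets $[h,g']$, $[g,h']$ lie in $[G_c,G_c]$, as does $[h,h']$; being conjugated by elements of $G$ they stay in $[G_c,G_c]$, so using $[G_c,G_c]\lhd G$ to move $[g,g']$ to the right one gets $[f,f']=W\,[g,g']$ with $W\in[G_c,G_c]$ and $[g,g']\in[G(P),G(P)]$. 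For an arbitrary product of generators I again use normality of $[G_c,G_c]$ to collect all $[G_c,G_c]$-factors to the left past the $G(P)$-factors, yielding $\xi=\zeta\eta$ with $\zeta\in[G_c,G_c]$ and $\eta$ a product of brackets $[g,g']\in[G(P),G(P)]$.

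Since $G(P)$ is perfect and, by Lemma 6.6, \emph{every} element of $G(P)$ is a product of at most two commutators of elements of $G(P)$, and since $G(P)\leq[G,G]$, I obtain $\cl_{[G,G]}(\eta)\leq 2$ uniformly in $\eta$ and in the choice of $P$. It remains to bound $\cl_{[G,G]}(\zeta)\leq\cld_{[G_c,G_c]}$, i.e. to prove that $[G_c,G_c]$ is uniformly perfect. Here $G_c$ is non-fixing (hypothesis) and factorizable (Def.~6.1 provides ball factorizations of its elements), hence locally moving and, as in the proof of Theorem 1.3, $\U$-moving for a suitable ball-refining cover $\U$; moreover $\cl_{G_c}$ is bounded by hypothesis. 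Using Proposition 6.8 to identify $\Fragd_{G_c}$ with $\fragd_{G_c}^{\B}$, the assumed boundedness of $\frag_G^{\V}$ for $\V\prec\B$ supplies boundedness of $G_c$ with respect to $\frag^{\T}$ for a $G_c$-invariant cover $\T$ refining balls. With these data the reasoning of Theorems 3.5 and 3.8, applied to $G_c$, gives $\cld_{[G_c,G_c]}\leq C<\infty$ (of order $d\rr^{2}$, resp. $4d\rr^{4}$, where $d=\cld_{G_c}$ and $\rr=\fragd_{G_c}$). Combining the two estimates, $\cld_{[G,G]}\leq C+2<\infty$, so $[G,G]$ is uniformly perfect.

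The main obstacle I anticipate lies in the last step, and specifically in the bookkeeping that converts the single boundedness hypothesis on $\frag_G^{\V}$ into the precise fragmentation- and moving-data on $G_c$ demanded by the Section~3 theorems. One must verify that fragments supported in the small, ball-refining sets of $\V$ genuinely lie in $G_c$ (rather than merely in $G$), so that factorizations and their lengths computed in $G$ transfer to $G_c$ via Proposition 6.8, and that the moving cover furnished by non-fixing can be taken compatible (through the relation $\prec$) with the ball cover on which $\frag$ is controlled. By contrast, the reduction $\xi=\zeta\eta$ is pure commutator bookkeeping once normality of $[G_c,G_c]$ and localizability are available, and the bound on the $G(P)$-part is immediate from Lemma 6.6. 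The reason uniform perfectness survives the passage from a single commutator to an arbitrary product of commutators is exactly that Lemma 6.6 caps the $G(P)$-contribution at $2$ regardless of how intricate $\eta$ is.
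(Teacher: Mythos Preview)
Your overall architecture matches the paper's: split an arbitrary element of $[G,G]$ into a $G(P)$-piece and a $[G_c,G_c]$-piece, bound the first via Lemma~6.6, and control the second with the Section~3 machinery. Your reduction $\xi=\zeta\eta$ with $\zeta\in[G_c,G_c]$ and $\eta\in G(P)$ is in fact more carefully argued than the paper, which writes the decomposition only for a single bracket $[f_1,f_2]$ and then passes to ``any element of $[G,G]$'' without spelling out the normality-based rearrangement you perform. Your observation that Lemma~6.6 caps $\cl_{[G,G]}(\eta)$ at~$2$ uniformly, regardless of how many commutators went into $\xi$, is exactly the point.

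The one place where your route diverges, and where your anticipated obstacle is real, is the treatment of $\zeta\in[G_c,G_c]$. You aim to prove that $[G_c,G_c]$ itself is uniformly perfect by invoking Theorems~3.5/3.8 for the group $G_c$; this needs $\frag_{G_c}^{\T}$ bounded and a $G_c$-invariant moving cover. But the hypothesis gives only boundedness of $\frag_G^{\V}$, and a $G$-factorization $h=h_1\cdots h_\rr$ of $h\in G_c$ produces factors $h_i\in G_{V_i}$ that need not lie in $G_c$ (compact support alone does not put them in $G(B_i)$; cf.\ Example~6.3). Proposition~6.8 concerns only the cover~$\B$ by all balls and does not bridge this gap for a finer~$\T$. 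So as written, the appeal to Theorems~3.5/3.8 for $G_c$ is not justified.

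The paper avoids this entirely: it never proves uniform perfectness of $[G_c,G_c]$. Instead it factorizes the $G_c$-entries of each bracket using $\frag_G^{\V}$ (fragments in $G$, not $G_c$), obtaining via~(3.1) at most $d\rr^2$ brackets $[k_1,k_2]$ with $k_i\in G_U$, $U\in\U^G$; then, using that $[G,G]$ is $1$-non-fixing (Proposition~3.2), it moves $U$ off itself by some $h=[h_1,h_2]$ with $h_1,h_2\in[G,G]$ and writes $[k_1,k_2]=[[h,k_1],k_2]$ as a product of four commutators of elements of $[G,G]$. No hypothesis on $G_c$ beyond non-fixing is needed. Your argument becomes complete if you replace ``apply Theorems~3.5/3.8 to $G_c$'' by this direct computation; everything else you wrote stands.
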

\begin{proof} By Theorem 6.9, $[G,G]$ is perfect. In view of Proposition 3.2, $[G,G]$ is
1-non-fixing. Due to this fact and Lemma 3.3  we can find an open
cover $\U$ such that $\U\prec\B$ and such that for each $U\in\U$
there are $h_1,h_2\in[G,G]$ with $U\cap [h_1,h_2](U)=\emptyset$.
We denote
\begin{equation*}G^{\U}=\prod\limits_{U\in \U^G}[G_U,G_U].\end{equation*}
Here  $\U^G:=\{g(U):\; g\in G \text{\; and \;} U\in\U\}$. Then
also for each $U\in\U^G$ there is $h_1,h_2\in[G,G]$ with $U\cap
[h_1,h_2](U)=\emptyset$.

Assume   that  $\V\prec\U$  and $\fragd^{\V}_G= \rr$. Let
$[f_1,f_2]\in[G,G]$. As in the proof of Theorem 6.9 we have
$$[f_1,f_2]=[g_1,g_2][h_1,h_2][h'_1,h'_2][h''_1,h''_2],$$
where $g_1,g_2\in G(P)$ and $h_1\ld h''_2\in G_c$. By Lemma 6.6
and (3.1), $[g_1,g_2]$ is a product of four commutators of
elements of $[G,G]$.

Next, any $[h_1,h_2]\in[G_c,G_c]$ can be expressed as a product of
at most $\rr^2$  elements of $G^{\U}$ of the form $[k_1,k_2]$,
where $k_1,k_2\in G_U$ for some $U$. In fact, it is a consequence
of (3.1) and the fact that  $\V\prec\U$. Now if $\cld_{G_c}=d$,
then every element of $[G_c,G_c]$ is a product of at most $d\rr^2$
elements of $G^{\U}$ of the form $[k_1,k_2]$, where $k_1,k_2\in
G_U$ for some $U$.

 Finally, fix arbitrarily $U\in \U^G$. We wish to show that for
every $k_1,k_2\in G_U$ the bracket $[k_1,k_2]$ can be represented
as a product of four commutators of   elements of $[G,G]$.
%In view of Proposition ?? the group $[G,G]$ is strongly $\V$-moving.
By assumption on $\U^G$, there are $h_1,h_2\in [G,G]$  such that
$h(U)\cap U=\emptyset$ for $h=[h_1,h_2]$. It follows that
$[hk_1h^{-1}, k_2]=\id$. Therefore, $[[h,k_1],k_2]=[k_1,k_2]$.
Observe that indeed $[[h,k_1],k_2]$ is a product of four
commutators of elements of $[G,G]$. Thus any element of $[G,G]$ is
a product of at most $4d(1+\rr^2)$ commutators of elements of
$[G,G]$.

\end{proof}
\begin{cor}
Suppose that the assumptions of Theorem 6.9 are fulfilled and that
$G$ is bounded. Then $[G,G]$ is uniformly perfect.
\end{cor}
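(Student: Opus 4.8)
The plan is to deduce the final corollary (Corollary 6.11) from Theorem 6.10 by verifying its two boundedness hypotheses from the single assumption that $G$ is bounded. Recall that $G$ is bounded if and only if every conjugation-invariant norm on $G$ is bounded, as noted in section 2. So the strategy is to produce each required boundedness statement from a suitable conjugation-invariant norm on $G$ (or on a related group) and invoke boundedness of $G$.

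First I would handle the boundedness of $\frag_G^{\V}$ for $\V\prec\B$. Since $\frag_G^{\V}$ is itself a conjugation-invariant norm on $G$ (as established in section 2 for $G$-invariant covers, and $\frag_G^{\V}\geq\frag_G^{\B}$ when $\V$ is finer than $\B$), the boundedness of $G$ forces $\fragd_G^{\V}<\infty$ directly. More carefully, I would appeal to Proposition 6.8, which identifies $\fragd_{G_c}=\Fragd_{G_c}$, together with the definition of the fragmentation norm $\Frag_G$ in Def. 6.1, which is explicitly a conjugation-invariant norm on $G$; boundedness of $G$ then yields $\Fragd_G<\infty$, hence $\fragd_G^{\V}\leq\rr<\infty$ for the relevant $\V$.

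The second hypothesis, boundedness of $\cl_{G_c}$, is the place where I expect the real work to lie, and it is the main obstacle. The difficulty is that $\cl_{G_c}$ is a norm on $[G_c,G_c]$, not on all of $G$, so boundedness of $G$ does not immediately transfer to it. The cleanest route is to mimic the proof of Corollary 3.4: there the boundedness of $\cl_G$ on $[G,G]$ was obtained from boundedness of $G$ via Proposition 1.4 of \cite{bip}. I would apply the same Proposition 1.4 to the group $G_c$, which requires first checking that $G_c$ is itself bounded. This is precisely the subtle point, since $G_c$ is a proper subgroup of $G$ and boundedness need not pass to subgroups in general; one must use the specific structure here, namely that $G_c$ is the compactly supported part arising from the $\B$-factorization and that $G$ is localizable (Def. 6.7), so that a fragmentation norm on $G_c$ is controlled by $\Frag_G$ restricted appropriately. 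Via Proposition 6.8 the norm $\frag_{G_c}$ and $\Frag_{G_c}$ coincide in diameter, and $\Fragd_{G_c}\leq\Fragd_G<\infty$, giving boundedness of $G_c$ with respect to the fragmentation norm; by Proposition 1.1 (the analogue of Prop. 1.15 of \cite{bip}) this is equivalent to boundedness of $G_c$.

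Once $G_c$ is shown to be bounded, Proposition 1.4 of \cite{bip} delivers that $\cl_{G_c}$ is bounded on $[G_c,G_c]$, i.e. $\cld_{G_c}=d<\infty$. With both hypotheses of Theorem 6.10 verified, the conclusion that $[G,G]$ is uniformly perfect follows at once, and one even records the explicit bound $\cld_{[G,G]}\leq 4d(1+\rr^2)$ from that theorem. Thus the whole proof reduces to the two-line invocation ``since $G$ is bounded, so is $G_c$, whence $\cl_{G_c}$ and $\frag_G^{\V}$ are bounded; now apply Theorem 6.10,'' with the genuine content concentrated in justifying the boundedness of $G_c$ as a subgroup.
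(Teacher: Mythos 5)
Your handling of the fragmentation norm is fine and agrees with the paper: $\Frag_{G}$ is a conjugation-invariant norm on $G$, so boundedness of $G$ bounds it, and Proposition 6.8 transfers this to $\frag_{G_c}$ (this is exactly the paper's second clause). The gap is in your argument that $G_c$ is bounded. You invoke the equivalence of Proposition 2.1 (``bounded iff bounded with respect to $\frag^{\U}$ for a cover by balls''), but that proposition is stated, and proved, only for the full group $\D^r_c(M)$: the direction you need --- finite fragmentation diameter implies boundedness --- rests on Theorem 1.18 of \cite{bip} and the portability of $\R^n$, which bound every conjugation-invariant norm on the local groups $\D^r_c(B)$ for a ball $B$. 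For an arbitrary subgroup the local groups $(G_c)_B$ need not be bounded, so a finite value of $\fragd_{G_c}$ does not yield boundedness of $G_c$. In addition, the inequality $\Fragd_{G_c}\leq\Fragd_G$ that you use is not automatic: a norm computed with factorizations constrained to lie in the subgroup is in general larger than the one computed in $G$, not smaller. So the boundedness of $G_c$, which you correctly identify as the crux, is not actually established by your argument.

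For comparison, the paper does not attempt to prove that $G_c$ is bounded at all. Its entire proof is the two-line observation that $\cl_G$ is bounded by Proposition 1.4 of \cite{bip} applied to the bounded group $G$ (exactly as in Corollary 3.4), and that $\frag_{G_c}$ is bounded by Proposition 6.8; it then feeds these into Theorem 6.10. In other words, the paper works with the commutator length of the ambient group $G$ rather than of $G_c$, and implicitly treats that as sufficient for the estimates in the proof of Theorem 6.10. Your observation that the literal hypothesis ``$\cl_{G_c}$ bounded'' does not obviously follow from boundedness of $G$ is a legitimate criticism of how Theorem 6.10 is phrased, but the route you chose to close that gap --- deducing boundedness of the subgroup $G_c$ from its fragmentation norm via Proposition 2.1 --- does not go through.
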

In fact, $\cl_G$ is bounded in view of Proposition 1.4 in
\cite{bip}, and $\frag_{G_c}$ is bounded in view of Proposition
6.8.

\begin{rem}
By using Theorems 3.5 and 3.8, Lemma 6.6 and (3.1) we can obtain
some estimates on $\cl_{[G,G]}$.
\end{rem}

\section{Examples and open problems}

Let $M$ be a paracompact manifold, possibly with boundary, of
class $C^{r}$, $r=0,1, \ldots,\infty$.

\medskip

{\bf 1.} Let $M$ be a manifold with a boundary,
$\mathrm{dim}(M)=n\geqslant
    2$. Then $G=\D^{r}_{c}(M)$, where $r=0,1,\dots
    ,\infty$, $r\neq n$ and $r\neq n+1$ is perfect ( \cite{ry2}, \cite{ry}) and non-simple.
Recently, Abe and Fukui \cite{af09}, using results of Tsuboi
\cite{Tsu2} and their own methods, showed that $G$ is also
uniformly perfect for many types of $M$. In the remaining cases,
where we do not know whether $G$ is perfect or uniformly perfect,
our results are of use.
    \medskip

{\bf 2.}
    Let $N$ be a  submanifold of $M$ of class $C^r$, {$r=0,1,\ldots ,\infty$}, and $\dim N\geq 1$.
    It was proved in \cite{ry3} that $G_c$, where $G=\D^r(M,N)$ is the identity component of the group
    of $C^r$-diffeomorphisms preserving $N$,  is perfect.
    The same was proved in the Lipschitz category in \cite{af}. All these groups are clearly non-simple.
    It follows from \cite{af09} that $G_c$ is also uniformly
    perfect for many types of pairs $(M,N)$. Several results of
    the present paper give new information on the structure of $G$
    and $G_c$.

\medskip
{\bf 3.} Given a foliation $\mathcal{F}$ of dimension $k$ on a
    manifold $M$, let $G=\D^{r}(M,\mathcal{F})$ be the identity
    component
    group of all diffeomorphisms of class $C^r$ taking each leaf to
    itself. Due to results of Rybicki \cite{ry1}, Fukui and Imanishi \cite{fi} and Tsuboi \cite{Tsu1},
    the group $G_c$ is
    perfect provided $r=0,1,\dots ,k$ or $r=\infty$.
     It is very
    likely that for large (but finite) $r$  the group $\D_c^{r}(M,\mathcal{F})$ is not
    perfect (c.f. a discussion on this problem in \cite{le-ry}). It is
    a highly non-trivial problem whether $G_c$ is uniformly
    perfect. Several results of the present paper apply to $G_c$ or
    $G$.

\medskip

{\bf 4.} Let  $\mathcal{F}$ be a foliation of dimension $k$ on the
Lipschitz
    manifold $M$ and let $G=\mathrm{Lip}(M,\mathcal{F})$ be the
    group of all Lipschitz homeomorphisms taking each leaf of $\F$ to
    itself. In view of results of Fukui and Imanishi \cite{fi1},
    the group $G_c$ is
    perfect. Further results may be concluded from our paper.

\medskip

{\bf 5.}
    Assume now that $\mathcal{F}$ is a singular foliation,
    i.e. the dimensions of its leaves need not be equal (see \cite{st}). One can
    consider the  group of~leaf-preserving
    diffeomorphisms of $\mathcal{F}$,
    $G=\D^{\infty}(M,\mathcal{F})$. However, it is
    hopeless to obtain any perfectness results for this group. On the other
    hand,
    Theorem 1.2 still works in this case and we know that the
    commutator group $[G_c,G_c]$ is perfect. We do not know whether
    $[G_c,G_c]$ is uniformly perfect.
\medskip

{\bf 6.}
    Let us recall the definition of Jacobi manifold (see \cite{dlm}).
    Let $M$ be a $C^{\infty}$ manifold, let $\frak{X}(M)$ be the
    Lie algebra of the vector fields on $M$ and denote by
    $C^{\infty}(M,\mathbb{R})$ the algebra of $C^{\infty}$
    real-valued functions on $M$. A \emph{Jacobi structure} on $M$
    is a pair $(\Lambda, E)$, where $\Lambda$ is a 2-vector field and
    $E$ is a vector field on $M$ satisfying
    $$[\Lambda, \Lambda]=2E \wedge \Lambda,\quad [E,\Lambda]=0.$$
    Here, $[\, ,\,]$ is the Schouten-Nijenhuis bracket. The
    manifold $M$ endowed with the Jacobi structure is called a \emph{Jacobi
    manifold}. If $E=0$ then
    $(M,\Lambda)$ is a Poisson manifold.
    Observe that the notion of Jacobi manifold generalizes also symplectic, locally conformal
    symplectic and contact manifolds.

    Now, let $(M,\Lambda,E)$ be a Jacobi manifold.
    A diffeomorphism $f$ on $M$ is called a \emph{hamiltonian diffeomorphism}
    if, by definition, there exists a hamiltonian isotopy $f_t$, $t\in
    [0,1]$, such that $f_0=\id$ and $f_1=f$. An isotopy $f_t$ is
    \emph{hamiltonian} if the corresponding time-dependent vector
    field $X_t=\dot{f}_t\circ f_{t}^{-1}$ is hamiltonian.

    Let $G=\mathcal{H}(M,\Lambda,E)$ be
    the compactly supported identity component of all hamiltonian
    diffeomorphisms of class $C^{\infty}$ of $(M,\Lambda,E)$. It
    is not known whether $G$ is perfect,
    even in the case of regular Poisson manifold (\cite{ry4}). However, by
    Theorem 1.2 the commutator group $[G,G]$ is
    perfect. It is an interesting and difficult problem to answer when $[G,G]$ is uniformly perfect.

     In the transitive cases, the compactly supported
    identity components of the hamiltonian symplectomorphism group
    and the contactomorphism group are simple (\cite{ban1}, \cite{ha-ry}, \cite{ry5}).
In general, $G$ and $\tilde G$ is not uniformly perfect in the
symplectic case, see \cite{bip}. An obstacle for the uniform
simplicity of the first group is condition (2) in Theorem 4.3. On
the other hand, the contactomorphism group satisfies this
condition and it is likely that for some contact manifolds it is
uniformly simple.

\end{document}